\mathchardef\ordinarycolon\mathcode`\:
\def\vcentcolon{\mathrel{\mathop\ordinarycolon}}
\theoremstyle{plain}
\newtheorem{theorem}{Theorem}[section]
\newtheorem{lemma}[theorem]{Lemma}
\newtheorem{proposition}[theorem]{Proposition}
\newtheorem{corollary}[theorem]{Corollary}
\theoremstyle{definition}
\newtheorem{definition}[theorem]{Definition}
\newtheorem{example}[theorem]{Example}
\newtheorem{notation}[theorem]{Notation}
\newtheorem{remark}[theorem]{Remark}
\newenvironment{mylist}%
{\begin{list}{}%
{\leftmargin 4.75em\labelwidth 3.5em\rightmargin 0.8em%
\topsep 0.75ex\itemsep 0.25ex}}%
{\end{list}}
{\begin{list}{}{}}{\end{list}}
\let\origthebibliography=\thebibliography
\def\thebibliography{\renewcommand{\section}[2]{}\origthebibliography}
\newcommand{\algten}{\mathbin{\odot}}
\newcommand{\bop}[2]%
{\ifthenelse{\equal{#2}{}}{\bopp( #1 )}{\bopp( #1; #2 )}}
\newcommand{\bopp}{B}
\newcommand{\borel}{\mathcal{B}}
\newcommand{\comp}{\mathbin{\circ}}
\newcommand{\elltwo}{L^2( \R_+; \mul )}
\newcommand{\evec}[1]{\evecc(#1)}
\newcommand{\evecc}{\varepsilon}
\newcommand{\evecs}{\mathcal{E}}
\newcommand{\fock}{\mathcal{F}}
\newcommand{\hlf}{\mbox{$\frac12$}}
\newcommand{\im}{\mathop{\mathrm{im}}}
\newcommand{\ini}{\mathsf{h}}
\newcommand{\intd}{\,\rd}
\newcommand{\mul}{\mathsf{k}}
\newcommand{\rd}{\mathrm{d}}
\newcommand{\stlim}{\mathop{\mathrm{st.lim}}}
\newcommand{\uwkten}{\mathbin{\overline{\otimes}}}
\newcommand{\vna}{\mathcal{M}}
\newcommand{\wh}[1]{\widehat{#1}}
\newcommand{\C}{\mathbb{C}}
\newcommand{\I}{\mathrm{i}}
\newcommand{\R}{\mathbb{R}}
\newcommand{\ie}{\textit{i.e., }}
\renewcommand{\ge}{\geqslant}
\renewcommand{\le}{\leqslant}
\numberwithin{equation}{section}
\begin{document}

\begin{center}
{\LARGE Stopping the CCR flow and its isometric cocycles}
\begin{multicols}{2}
{\large Alexander C.~R.~Belton}\\[0.5ex]
{\small Department of Mathematics and Statistics\\
Lancaster University, United Kingdom\\[0.5ex]
\textsf{a.belton@lancaster.ac.uk}}
\columnbreak

{\large Kalyan B.~Sinha}\\[0.5ex]
{\small Jawaharlal Nehru Centre for Advanced\\
Scientific Research, Bangalore, India\\[0.5ex]
\textsf{kbs@jncasr.ac.in}}
\end{multicols}
{\small 21st March 2013}
\end{center}

\begin{abstract}\noindent
It is shown how to use non-commutative stopping times in order to stop
the CCR flow of arbitrary index and also its isometric cocycles, \ie
left operator Markovian cocycles on Boson Fock space. Stopping the CCR
flow yields a homomorphism from the semigroup of stopping times,
equipped with the convolution product, into the semigroup of unital
endomorphisms of the von~Neumann algebra of bounded operators on the
ambient Fock space. The operators produced by stopping cocycles
themselves satisfy a cocycle relation.
\end{abstract}

{\footnotesize\textit{Key words:} quantum stopping time; quantum stop
time; quantum Markov time; operator cocycle; Markov cocycle; Markovian
cocycle; quantum stochastic cocycle; CCR flow.}

{\footnotesize\textit{MSC 2010:} %
46L53 (primary);   
46L55,             
60G40 (secondary). 
}

\section{Introduction}

Several authors have investigated the use of non-commutative stopping
times to stop quantum stochastic processes, beginning with the
pioneering work of Hudson \cite{Hud79}, Barnett and
Lyons~\cite{BaL86}, Parthasarathy and Sinha \cite{PaS87} and Sauvageot
\cite{Sau88}; in the framework of Fock-space quantum stochastic
calculus, more recent developments have been produced by Attal and
Sinha~\cite{AtS98}, Hudson~\cite{Hud07} and Coquio~\cite{Coq06}.

At its most general, a stopping time $S$ is an increasing,
time-indexed family $( S_t )_{t \in [ 0, \infty ]}$ of orthogonal
projections in a von~Neumann algebra which are subordinate to some
filtration. Below, the ambient von~Neumann algebra is $\bop{\fock}{}$,
the bounded operators on Boson Fock space over $\elltwo$, with $\mul$
an arbitrary complex Hilbert space; the filtration is that generated
by the increasing family of subspaces
$L^2\bigl( [ 0, t ]; \mul \bigr) \subseteq \elltwo$.

The CCR flow $\sigma$ is a semigroup $( \sigma_t )_{t \in \R_+}$ of
unital endomorphisms on $\bop{\fock}{}$ which arise from the isometric
right shift on $\elltwo$; CCR flows are fundamental examples of
$E_0$~semigroups~\cite{Arv03}. A stopped version $\sigma_S$ of the CCR
flow is constructed for any finite stopping time $S$, and the
composition of stopped flows is shown to be a homomorphism for the
stopping-time convolution introduced in \cite[Section~7]{PaS87}.

A cocycle $V$ for the CCR flow~$\sigma$ is a family
$( V_t )_{t \in \R_+}$ of bounded operators on $\ini \otimes \fock$,
where $\ini$ is an arbitrary complex Hilbert space, such that
\begin{equation}\label{eqn:cocycledef}
V_{s + t} = V_s \sigma_s( V_t ) \qquad \text{for all } s, t \in \R_+;
\end{equation}
the endomorphism $\sigma_s$ is extended to
$\bop{\ini \otimes \fock}{}$ by ampliation. (Although such cocycles
may be obtained by solving quantum stochastic differential equations,
we shall have no use for quantum stochastic calculus in this work.) It
is explained below how to stop such a cocycle $V$ with a stopping time
$S$ whenever the cocycle is isometric, strongly continuous and
satisfies a locality condition. Furthermore, the identity
\[
V_{S + t} = V_S \, \sigma_S( V_t )
\]
is shown to hold for all $t \in \R_+$; this is a non-deterministic
generalisation of the relation~\eqref{eqn:cocycledef}. These results
may be viewed as an extension of those obtained by Applebaum
\cite{App88}.

\section{Quantum stop times}

\begin{notation}
Let $\fock_A$ denote the Boson Fock space over $L^2( A; \mul )$, where
$A$ is a subinterval of~$\R_+$ and $\mul$ is a complex Hilbert
space which is fixed henceforth. Let $\fock := \fock_{\R_+}$,
$\fock_{t)} := \fock_{[ 0, t )}$ and
$\fock_{[t} := \fock_{[ t, \infty )}$ for all $t \in \R_+$, with
similar abbreviations for the identity operators $I$, $I_{t)}$
and~$I_{[t}$ on these spaces. Recall the tensor-product
decompositions
\[
\fock \cong \fock_{s)} \otimes \fock_{[s}; \ %
\evec{f} \mathbin{\leftrightarrow} %
\evec{f|_{[ 0, s )}} \otimes \evec{f|_{[ s, \infty )}}
\]
and
\[
\fock \cong \fock_{s)} \otimes \fock_{[s, t)} \otimes \fock_{[t}; \ %
\evec{f} \mathbin{\leftrightarrow} \evec{f|_{[ 0, s )}} \otimes %
\evec{f|_{[ s, t )}} \otimes \evec{f|_{[ t, \infty )}},
\]
where $s$, $t \in \R_+$ are such that $s < t$ and $\evec{g}$ is the
exponential vector corresponding to~$g$; these isomorphisms will be
used frequently without comment. Let~$\evecs$ denote the linear span
of the set of exponential vectors. For further details, see
\cite{Lin05} or \cite{Par92}.
\end{notation}

\begin{definition}\label{def:qst}
A \emph{quantum stop time} is a map
\[
S : \borel[ 0, \infty ] \to \bop{\fock}{},
\]
where $\borel[ 0, \infty ]$ is the Borel $\sigma$-algebra on the
extended half-line $[ 0, \infty ] := \R_+ \cup \{ \infty \}$, such
that
\begin{mylist}
\item[(i)] $S( A )$ is an orthogonal projection for all
$A \in \borel[ 0, \infty ]$;
\item[(ii)]
$\borel[ 0, \infty ] \to \C; \ %
A \mapsto \langle x, S( A ) y \rangle$
is a complex measure for all $x$,~$y \in \fock$;
\item[(iii)] $S\bigl( [ 0, \infty ] \bigr) = I$;
\item[(iv)] $S\bigl( \{ 0 \} \bigr) \in \C I$ and 
$S\bigl( [ 0, t ] \bigr) \in \bop{\fock_{t)}}{} \otimes I_{[t}$
for all $t \in ( 0, \infty )$.
\end{mylist}
\end{definition}
In other words, a quantum stop time is a spectral measure on
$\borel[ 0, \infty ]$ (conditions (i), (ii) and (iii)) which is
identity adapted (condition (iv)).

\begin{remark}
An equivalent way to view a quantum stop time is as an increasing,
identity-adapted family $( S_t )_{t \in [ 0, \infty ]}$ of orthogonal
projections on $\fock$ with $S_\infty = I$. Given a quantum stop time
in the sense of Definition~\ref{def:qst}, setting
$S_t : = S\bigl( [ 0, t ] \bigr)$ yields such a family; the spectral
theorem for self-adjoint operators may be used to move in the opposite
direction. Consequently, a quantum stop time $S$ may be defined by
specifying $S\bigl( [ 0, t ] \bigr)$ for all $t \in \R_+$.
\end{remark}

\begin{remark}
Conditions (i) and (iv) in Definition~\ref{def:qst} imply that
$S\bigl( \{ 0 \} \bigr)$ is either $0$ or $I$; as the latter
possibility leads to a trivial theory, which corresponds to stopping
immediately at~$0$, henceforth we shall require that any quantum stop
time $S$ is such that $S\bigl( \{ 0 \} \bigr) = 0$. The results below
take their most elegant form when $S$ is \emph{finite}, \ie
$S\bigl( \{ \infty \} \bigr) = 0$, so this condition is imposed as
well.
\end{remark}

\section{Time projections and right shifts}

\begin{definition}
Let $E_0 \in \bop{\fock}{}$ be the orthogonal projection onto
$\C \evec{0}$ and, for all $t \in ( 0, \infty )$, let
\[
E_t : \fock \to \fock; \ \evec{f} \mapsto \evec{1_{[ 0, t )} f}
\]
be the orthogonal projection onto
$\fock_{t)} \otimes \evec{0|_{[ t, \infty )}}$, where $1_{[ 0, t )}$
is the indicator function of the interval $[ 0, t )$. For all
$t \in \R_+$, let $\theta_t$ be the isometric right shift, so that
\[
\theta_t : \elltwo \to \elltwo; \ %
( \theta_t f )( s ) = 1_{[ t, \infty )}( s ) f( s - t ) = \left\{ %
\begin{array}{ll} f( s - t ) & \text{if } s \ge t, \\[1ex]
 0 & \text{if } 0 < s < t,
\end{array}\right.
\]
and let
\[
\Gamma_t : \fock \to \fock; \ \evec{f} \mapsto \evec{\theta_t f}
\]
be its second quantisation, which has adjoint
\[
\Gamma_t^* : \fock \to \fock; \ %
\evec{f} \mapsto \evec{\theta_t^* f},
\]
where $( \theta_t^* f )( s ) = f( s + t )$ for all $s \in \R_+$. As is
well known, $( \Gamma_t )_{t \in \R_+}$ and
$( \Gamma_t^* )_{t \in \R_+}$ are semigroups on $\fock$. Let
$E_\infty := I$, $\theta_\infty := 0$ and
$\Gamma_\infty := \Gamma( \theta_\infty ) = E_0$.

Note that $E_s \Gamma_t = E_0$ and $E_t \Gamma_s = \Gamma_s E_{t - s}$
for all $s$, $t \in \R_+ $ such that~$s \le t$. Furthermore,
the maps $t \mapsto \Gamma_t$ and $t \mapsto \Gamma_t^*$ are strongly
continuous on $[ 0, \infty )$, whereas $t \mapsto E_t$
and~$t \mapsto \Gamma_t \Gamma_t^*$ are strongly continuous on
$[ 0, \infty ]$.
\end{definition}

\begin{definition}
Let $J$ be a closed subinterval of $\R_+$ which contains $0$, so that
either $J = [ 0, t ]$ for some $t \in \R_+$ or $J = \R_+$. A
map $F : J \to \fock$ is \emph{future adapted} if and only if
\[
F( s ) = \evec{0|_{[ 0, s )}} \otimes F_s \in %
\evec{0|_{[ 0, s )}} \otimes \fock_{[s} %
\qquad \text{ for all } s \in J \setminus \{ 0 \}.
\]
If $J = \R_+$ then $F( \infty ) := \evec{0}$ and
$x \otimes F_\infty := x$ for all $x \in \fock$.
\end{definition}

\begin{notation}
For all $f$, $g \in \elltwo$, let $S^{f, g}$ be the finite Borel
measure on $[ 0, \infty ]$ such that
\[
S^{f, g}( A ) = \int_A \exp\Bigl( -\int_s^\infty %
\langle f( r ), g( r ) \rangle \intd r \Bigr) %
\langle \evec{f}, S( \rd s ) \evec{g} \rangle
\qquad \text{for all } A \in \borel[ 0, \infty ].
\]
\end{notation}

\begin{lemma}\label{lem:key}
Let $S$ be a finite quantum stop time, let $J$ be a closed subinterval
of $\R_+$ which contains $0$ and let $F : J \to \fock$ be
continuous, bounded and future adapted. For all $f \in \elltwo$, the
integral
\begin{equation}\label{eqn:mixint}
\int_{[ 0, t ]} S( \rd s ) \evec{f|_{[ 0, s )}} \otimes F_s = %
\lim_\pi \sum_{j = 1}^{n + 1} S\bigl( ( \pi_{j - 1}, \pi_j ] \bigr) %
\evec{f_{[ 0, \pi_j )}} \otimes F_{\pi_j}
\end{equation}
is well defined, where $t := \sup J$ and the limit is taken over the
collection of all finite partitions
$\pi = \{ 0 = \pi_0 < \cdots < \pi_{n + 1} = t \}$,
ordered by refinement. If $g \in \elltwo$ and $G : J \to \fock$ is
continuous, bounded and future adapted then
\begin{equation}\label{eqn:keyip}
\langle %
\int_{[ 0, t ]} S( \rd s ) \evec{f|_{[ 0, s )}} \otimes F_s, %
\int_{[ 0, t ]} S( \rd s ) \evec{g|_{[ 0, s )}} \otimes G_s %
\rangle = %
\int_{[ 0, t ]} \langle F( s ), G( s ) \rangle \, S^{f, g}( \rd s ),
\end{equation}
and if $r \in ( 0, \infty ]$ then
\begin{equation}\label{eqn:keyS}
S( [ 0, r ] ) %
\int_{[ 0, t ]} S( \rd s ) \evec{f|_{[ 0, s )}} \otimes F_s = %
\int_{[ 0, r \wedge t ]} %
S( \rd s ) \evec{f|_{[ 0, s )}} \otimes F_s,
\end{equation}
where $r \wedge t$ is the minimum of $r$ and $t$.
\end{lemma}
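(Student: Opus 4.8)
The plan is to derive all three assertions from a single computation of the inner product of the partial sums appearing on the right-hand side of~\eqref{eqn:mixint}, exploiting the fact that $S$ is a spectral measure. Fix a partition $\pi = \{ 0 = \pi_0 < \cdots < \pi_{n+1} = t \}$ and write $\Phi_\pi( f, F ) := \sum_{j = 1}^{n+1} S\bigl( ( \pi_{j-1}, \pi_j ] \bigr) \evec{f|_{[ 0, \pi_j )}} \otimes F_{\pi_j}$, with $\Phi_\pi( g, G )$ defined analogously over the same partition. Since the sets $( \pi_{j-1}, \pi_j ]$ are disjoint, the projections $S\bigl( ( \pi_{j-1}, \pi_j ] \bigr)$ are mutually orthogonal, so the off-diagonal terms in $\langle \Phi_\pi( f, F ), \Phi_\pi( g, G ) \rangle$ vanish and only the diagonal $j = k$ contributions survive.

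For each diagonal term I would invoke identity adaptedness: as both $S\bigl( [ 0, \pi_j ] \bigr)$ and $S\bigl( [ 0, \pi_{j-1} ] \bigr)$ lie in $\bop{\fock_{\pi_j )}}{} \otimes I_{[\pi_j}$, their difference factorises as $R_j \otimes I_{[\pi_j}$ for some projection $R_j$ on $\fock_{\pi_j )}$. Together with future adaptedness, which places $F_{\pi_j}$ and $G_{\pi_j}$ in $\fock_{[\pi_j}$, each diagonal inner product splits as $\langle \evec{f|_{[ 0, \pi_j )}}, R_j \evec{g|_{[ 0, \pi_j )}} \rangle \, \langle F_{\pi_j}, G_{\pi_j} \rangle$. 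Testing $R_j \otimes I_{[\pi_j}$ against the full exponential vectors $\evec{f}$ and $\evec{g}$ and cancelling the future factor $\exp\bigl( \int_{\pi_j}^\infty \langle f( r ), g( r ) \rangle \intd r \bigr)$ identifies the first factor with $\int_{( \pi_{j-1}, \pi_j ]} \exp\bigl( -\int_{\pi_j}^\infty \langle f, g \rangle \intd r \bigr) \langle \evec{f}, S( \rd s ) \evec{g} \rangle$; since $\langle F( s ), G( s ) \rangle = \langle F_s, G_s \rangle$ because the vacuum factors have unit norm, the whole sum is a Riemann--Stieltjes approximation to $\int_{[ 0, t ]} \langle F( s ), G( s ) \rangle \, S^{f, g}( \rd s )$, the only discrepancy being the evaluation of the exponent at $\pi_j$ rather than at the integration variable $s$.

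The principal work, and the step I expect to be the main obstacle, is turning this approximation into genuine convergence, thereby establishing simultaneously that~\eqref{eqn:mixint} is well defined and that~\eqref{eqn:keyip} holds. For this I would show that the net $\bigl( \Phi_\pi( f, F ) \bigr)_\pi$ is Cauchy by applying the same orthogonality-and-adaptedness identity to the difference vectors generated when two partitions are compared over their common refinement; each resulting norm is then controlled by the modulus of continuity of $s \mapsto \evec{f|_{[ 0, s )}} \otimes F_s$ on the compact interval $J$, weighted by the finite measure $S^{f, f}$. Uniform continuity of $F$ and strong continuity of $s \mapsto \evec{f|_{[ 0, s )}}$, combined with finiteness of $S^{f, f}$, force the mesh-indexed error to vanish; passing to the limit in the partial-sum inner product then yields~\eqref{eqn:keyip}, the case $f = g$, $F = G$ furnishing the norm bound that secures existence of the limit~\eqref{eqn:mixint}.

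The identity~\eqref{eqn:keyS} is comparatively routine and follows by taking limits. Because $S$ is a spectral measure, $S\bigl( [ 0, r ] \bigr) S\bigl( ( \pi_{j-1}, \pi_j ] \bigr) = S\bigl( [ 0, r ] \cap ( \pi_{j-1}, \pi_j ] \bigr)$ for every $j$, so applying $S\bigl( [ 0, r ] \bigr)$ to $\Phi_\pi( f, F )$ annihilates the summands lying beyond $r$ and leaves those below unchanged. Restricting attention to partitions that contain $r \wedge t$ as a node, the surviving sum is exactly the partial sum defining $\int_{[ 0, r \wedge t ]} S( \rd s ) \evec{f|_{[ 0, s )}} \otimes F_s$, and passing to the limit gives~\eqref{eqn:keyS}.
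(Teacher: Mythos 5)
Your computation of the diagonal terms is correct and matches the paper's underlying mechanism: the mutual orthogonality of the projections $S\bigl( ( \pi_{j-1}, \pi_j ] \bigr)$, the factorisation $S\bigl( ( \pi_{j-1}, \pi_j ] \bigr) = R_j \otimes I_{[\pi_j}$ from identity adaptedness, and the identification of the resulting scalar factors with $S^{f,g}$-masses up to evaluating the exponent at $\pi_j$ rather than $s$ are all sound, and your handling of \eqref{eqn:keyS} by passing to the cofinal family of partitions containing $r \wedge t$ as a node is exactly the paper's argument. The genuine gap is in the convergence step, where you control the Cauchy property of the net by ``the modulus of continuity of $s \mapsto \evec{f|_{[ 0, s )}} \otimes F_s$ on the compact interval $J$''. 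But $J$ need not be compact: the lemma explicitly allows $J = \R_+$, in which case $t = \sup J = \infty$ and every partition has $\pi_{n+1} = \infty$ as its final node.

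In that case your argument fails for two reasons. First, a continuous bounded function on $\R_+$ need not be uniformly continuous. Second, and more seriously, under the paper's conventions ($F( \infty ) := \evec{0}$ and $x \otimes F_\infty := x$) the integrand is genuinely discontinuous at $s = \infty$: in the application of this lemma in Theorem~\ref{thm:shift} one takes $F = \Gamma x$, and $s \mapsto \Gamma_s \evec{g} = \evec{\theta_s g}$ stays at a fixed positive norm-distance from $\Gamma_\infty \evec{g} = \evec{0}$ as $s \to \infty$, so no modulus-of-continuity estimate can control the final subinterval $( \pi_n, \infty ]$. The paper's proof copes with this by treating the last subinterval separately: its contribution to $I_{S, \pi'} - I_{S, \pi}$ is bounded, using adaptedness and orthogonality rather than continuity, by $2 \| S\bigl( ( \pi_n, t ) \bigr) \evec{f} \| \, \| F \|_\infty$, which is made arbitrarily small by fixing a sufficiently large penultimate partition point, the stop time $S$ being finite so that no mass survives near $\{ \infty \}$. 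Your proposal never isolates such a tail term, and indeed never uses finiteness of the stop time at all (the finiteness of the measure $S^{f,f}$, which you do invoke, holds automatically and is not the same thing); since the case $t = \infty$ is precisely the one needed for Theorems~\ref{thm:shift}, \ref{thm:isom} and~\ref{thm:SstarT}, this is a real gap, although it can be repaired by adding the paper's tail estimate to your scheme.
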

\begin{proof}
Let $I_{S, \pi}( f, F )$ denote the sum on the right-hand side of
\eqref{eqn:mixint}. Suppose $n \ge 1$ and let $\pi'$ be a refinement
of $\pi$; for $j = 0$, \ldots, $n$, let $k_j \ge j$ be such that
$\pi'_{k_j} = \pi_j$ and let~$l_j \ge 1$ be such
that~$\pi'_{k_j + l_j} = \pi_{j + 1}$. Then
\begin{align*}
&I_{S, \pi'}( f, F ) - I_{S, \pi}( f, F ) \\[1ex]
 & = \sum_{j = 0}^{n - 1} \sum_{l = 1}^{l_j} %
S\bigl( ( \pi'_{k_j + l - 1}, \pi'_{k_j + l} ] \bigr) %
\bigl( %
\evec{f|_{[ 0, \pi'_{k_j + l} )}} \otimes F_{\pi'_{k_j + l}} - %
\evec{f|_{[ 0, \pi'_{k_j + l_j} )}} \otimes F_{\pi'_{k_j + l_j}} %
\bigr) \\[1ex] 
 & \quad + \sum_{l = 1}^{l_n - 1} %
S\bigl( ( \pi'_{k_n + l - 1}, \pi'_{k_n + l} ] \bigr) %
\evec{f|_{[ 0, \pi'_{k_n + l} )}} \otimes F_{\pi'_{k_n + l}} - %
S\bigl( ( \pi_n, \pi'_{k_n + l_n - 1} ] \bigr) %
\evec{f|_{[ 0, t )}} \otimes F_t.
\end{align*}
If $r$, $s$, $u \in [ 0, t ]$ are such that $r < s \le u$ then
\[
\| S\bigl( ( r, s ] \bigr) %
\evec{f|_{[ 0, u )}} \otimes F_u \| \le %
\| S\bigl( ( r, s ] \bigr) \evec{f} \| \, \| F \|_\infty,
\]
where $\| F \|_\infty := \sup\{ \| F( x ) \| : x \in [ 0, t ] \}$; if,
also, $u < \infty$ then
\begin{align*}
\| S\bigl( ( r, s ] \bigr) %
\bigl( \evec{f&|_{[ 0, s )}} \otimes F_s - %
\evec{f|_{[ 0, u )}} \otimes F_u \bigr) \| \\[1ex]
 & \le \| S\bigl( ( r, s ] \bigr) \evec{1_{[ 0, s )} f} \| \, %
\bigl( \| F_s - \evec{0|_{[ s, u )}} \otimes F_u \| + %
\| \evec{0|_{[ s, u )}} - \evec{f|_{[ s, u )}} \| \, %
\| F_u \| \bigr) \\[1ex]
 & \le \| S\bigl( ( r, s ] \bigr) \evec{f} \| \, %
\bigl( \| F( s ) - F( u ) \| + %
\bigl( \| \evec{f|_{[ s, u )}} \|^2 - 1 \bigr)^{1 / 2} %
\| F \|_\infty \bigr).
\end{align*}
Consequently,
\begin{align*}
\| I_{S, \pi'}( f, F ) - I_{S, \pi}( f, F ) \| & \le %
\| S\bigl( [ 0, \pi_n ] \bigr) \evec{f} \| \bigl( %
\sup\{ \| F( r ) - F( \pi_j ) \| : %
r \in [ \pi_{j - 1}, \pi_j ], \ j = 1, \ldots, n \} \\
 & \hspace{7em} + \sup\{ \bigl( %
\| \evec{1_{[ \pi_{j - 1}, \pi_j )} f} \| - 1 \bigr)^{1 / 2} : %
j = 1, \ldots, n \} \, \| F \|_\infty \bigr) \\
 & \quad + 2 \| S\bigl( ( \pi_n, t ) \bigr) \evec{f} \| \, %
\| F \|_\infty
\end{align*}
and the integral exists. The inner-product identity is readily
verified.

For the final claim, note first that
$S\bigl( [ 0, r ] \bigr) I_{S, \pi}( f, F ) = I_{S, \pi}( f, F )$ if
$r \ge t$; otherwise, suppose without loss of generality that
$r = \pi_{m + 1}$ for some $m \ge 0$. Then
$S\bigl( [ 0, r ] \bigr) I_{S, \pi}( f, F ) = I_{S, \pi'}( f, F )$,
where $\pi' := \pi \cap [ 0, r ]$. As $\pi$ is refined, so is $\pi'$
and the result follows.
\end{proof}

\begin{corollary}\label{cor:key}
Let $S$ be a finite quantum stop time. If $J$ is a closed subinterval
of $\R_+$ which contains $0$ and has supremum $t$, and
$F : J \to \fock$ is Borel measurable, bounded and future adapted
then, for all $f \in \elltwo$, there exists a unique vector
$\int_{[ 0, t ]} S( \rd s ) \evec{f|_{[ 0, s )}} \otimes F_s %
\in \fock$
such that \eqref{eqn:keyip} and \eqref{eqn:keyS} hold for any
$g \in \elltwo$, any Borel-measurable, bounded and future-adapted
function $G : J \to \fock$ and any $r \in ( 0, \infty ]$.
\end{corollary}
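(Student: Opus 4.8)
The plan is to deduce the Borel case from Lemma~\ref{lem:key} by an $L^2$-completion argument, after first linearising the future-adaptedness constraint. The key observation is that the isometry $\Gamma_s$ has range $\evec{0|_{[ 0, s )}} \otimes \fock_{[s}$, so a map $F : J \to \fock$ is future adapted if and only if $F( s ) = \Gamma_s \wh{F}( s )$ for some unconstrained map $\wh{F} : J \to \fock$, namely $\wh{F}( s ) := \Gamma_s^* F( s )$. This correspondence preserves boundedness, Borel measurability and continuity (by strong continuity of $\Gamma$ and $\Gamma^*$), and satisfies $\langle F( s ), G( s ) \rangle = \langle \wh{F}( s ), \wh{G}( s ) \rangle$ pointwise; thus future-adapted functions on $J$ are identified with arbitrary $\fock$-valued functions on $J$.

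Writing $\Phi_f( F )$ for the integral in \eqref{eqn:mixint}, the identity \eqref{eqn:keyip} with $g = f$ and $G = F$ gives $\| \Phi_f( F ) \|^2 = \int_{[ 0, t ]} \| \wh{F}( s ) \|^2 \, S^{f, f}( \rd s )$, so $\Phi_f$ is isometric from the continuous future-adapted functions, with the seminorm of $L^2\bigl( [ 0, t ], S^{f, f}; \fock \bigr)$, into $\fock$. Given a bounded, Borel-measurable, future-adapted $F$, the associated $\wh{F}$ is bounded and Borel; since continuous functions are dense in $L^2\bigl( [ 0, t ], S^{f, f}; \fock \bigr)$ for the finite measure $S^{f, f}$, I would choose uniformly bounded continuous $\wh{F}^{( n )} \to \wh{F}$ there (truncating if necessary), set $F^{( n )}( s ) := \Gamma_s \wh{F}^{( n )}( s )$, and define $\Phi_f( F ) := \lim_n \Phi_f( F^{( n )} )$; the limit exists and is independent of the sequence by the isometry. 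Equation \eqref{eqn:keyS} then passes to the limit using boundedness of $S\bigl( [ 0, r ] \bigr)$, since restricting $F$ to $[ 0, r \wedge t ]$ leaves it Borel, bounded and future adapted. (For $J = \R_+$ one works on the compact space $[ 0, \infty ]$ using the conventions $\Gamma_\infty = E_0$, $F( \infty ) = \evec{0}$.)

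I expect the main obstacle to be extending the inner-product identity \eqref{eqn:keyip} to Borel $F$ and $G$, because the three measures $S^{f, f}$, $S^{g, g}$ and $S^{f, g}$ differ while one only controls $F^{( n )} \to F$ in $L^2( S^{f, f} )$. I would resolve this by dominating the total variation: since $S$ is a spectral measure, the Cauchy--Schwarz inequality applied to the sesquilinear form $A \mapsto \langle \evec{f}, S( A ) \evec{g} \rangle$, together with boundedness of the exponential weights on $[ 0, t ]$, yields a constant $C$ with
\[
\int_{[ 0, t ]} \| H( s ) \|^2 \, | S^{f, g} |( \rd s ) \le C \Bigl( \int_{[ 0, t ]} \| H( s ) \|^2 \, S^{f, f}( \rd s ) \Bigr)^{1 / 2} \Bigl( \int_{[ 0, t ]} \| H( s ) \|^2 \, S^{g, g}( \rd s ) \Bigr)^{1 / 2}
\]
for any bounded Borel $H$. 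Applying this to $H = F^{( n )} - F$, the first factor tends to $0$ while the second stays bounded by uniform boundedness of the $F^{( n )}$; as $\Phi_f( F^{( n )} ) \to \Phi_f( F )$ in norm, both sides of \eqref{eqn:keyip} converge, first for continuous $G$ and then, by the symmetric argument in the second slot, for Borel $G$.

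Finally, for uniqueness I would test against the vectors $\Phi_g( G )$. Taking $G( s ) := \Gamma_s \Gamma_s^* \evec{g} = \evec{1_{[ s, \infty )} g}$ gives $\evec{g|_{[ 0, s )}} \otimes G_s = \evec{g}$ for every $s$, whence $\Phi_g( G ) = \int_{[ 0, t ]} S( \rd s ) \evec{g} = S\bigl( [ 0, t ] \bigr) \evec{g}$; as $g$ ranges over $\elltwo$ these span a dense subspace of $\im S\bigl( [ 0, t ] \bigr)$, and by \eqref{eqn:keyS} with $r \ge t$ every admissible $\Phi_g( G )$ lies in $\im S\bigl( [ 0, t ] \bigr)$. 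Consequently \eqref{eqn:keyip} determines $\langle w, \cdot \rangle$ on a dense subspace of $\im S\bigl( [ 0, t ] \bigr)$, fixing $S\bigl( [ 0, t ] \bigr) w$, while \eqref{eqn:keyS} at $r = t$ forces $S\bigl( [ 0, t ] \bigr) w = w$. These two facts pin $w$ down completely, so the vector is unique and equals the constructed $\Phi_f( F )$.
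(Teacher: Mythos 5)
Your proof is correct and takes essentially the same route as the paper's: existence by approximating the Borel integrand by continuous ones (which the paper delegates to \cite[Propositions~4.9 and~4.10]{PaS87} and you implement directly, via the isometry of $F \mapsto \int_{[ 0, t ]} S( \rd s ) \evec{f|_{[ 0, s )}} \otimes F_s$ from $L^2( S^{f, f} )$ into $\fock$ together with a Cauchy--Schwarz domination of $| S^{f, g} |$ by $S^{f, f}$ and $S^{g, g}$), and uniqueness by testing against $G( s ) = \Gamma_s \Gamma_s^* \evec{g}$, whose integral is $S\bigl( [ 0, t ] \bigr) \evec{g}$ --- exactly the identity the paper invokes. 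The only caveats are measure-theoretic fine points (such as density of continuous $\fock$-valued functions in $L^2$ of a finite Borel measure when $\fock$ is non-separable) which the paper leaves equally implicit.
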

\begin{proof}
For existence, note that $F$ may be approximated in a suitable sense
by a sequence of continuous functions: see
\cite[Propositions~4.9 and~4.10]{PaS87} for details. Uniqueness holds
because the exponential vector
$\evec{f}$ equals
$\int_{[ 0, \infty ]} S( \rd s ) \evec{f|_{[ 0, s )}} %
\otimes \Gamma_s \Gamma_s^* \evec{f}$,
for any $f \in \elltwo$.
\end{proof}

\begin{remark}
Lemma~\ref{lem:key} and Corollary~\ref{cor:key} give the existence of
a class of stop-time integrals which is smaller than that introduced
by Parthasarathy and Sinha \cite[Section~4]{PaS87} but sufficient for
present purposes.
\end{remark}

\begin{theorem}\label{thm:expS}
Let $S$ be a finite quantum stop time. For all $t \in ( 0, \infty ]$,
there exists an orthogonal projection $E_{S, t} \in \bop{\fock}{}$
such that
\[
E_{S, t} \evec{f} = \int_{[ 0, t ]} S( \rd s ) E_s \evec{f} := %
\int_{[ 0, t ]} S( \rd s ) \evec{f|_{[ 0, s )}} \otimes %
\evec{0|_{[ s, \infty )}} \qquad \text{for all } f \in \elltwo.
\]
Furthermore,
$E_{S, s} E_{S, t} = E_{S, s \wedge t} = E_{S, t} E_{S, s}$ and
$S\bigl( [ 0, s ] \bigr) E_{S, \infty} = E_{S, s}$, for all
$s \in ( 0, \infty ]$.
\end{theorem}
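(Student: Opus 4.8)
The plan is to construct $E_{S, t}$ by extending the vector-valued prescription from exponential vectors and to read off all its properties from the inner-product identity~\eqref{eqn:keyip} and the adaptedness identity~\eqref{eqn:keyS}. Fix $t \in ( 0, \infty ]$ and apply Corollary~\ref{cor:key} to the bounded, future-adapted integrand with $F_s = \evec{0|_{[ s, \infty )}}$, so that $F( s ) = \evec{0}$ for every $s$; this makes $E_{S, t} \evec{f}$ a well-defined vector for each $f \in \elltwo$. Taking $F = G$ of this form in~\eqref{eqn:keyip} gives
\[
\langle E_{S, t} \evec{f}, E_{S, t} \evec{g} \rangle =
\int_{[ 0, t ]} \langle \evec{0}, \evec{0} \rangle \, S^{f, g}( \rd s ) =
S^{f, g}( [ 0, t ] ) \qquad \text{for all } f, g \in \elltwo.
\]
I would also record at the outset the conjugate symmetry $S^{f, g}( [ 0, t ] ) = \overline{S^{g, f}( [ 0, t ] )}$, which is immediate from the definition of $S^{f, g}$ and the self-adjointness of each $S( A )$.

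The first substantive step is to evaluate $\langle \evec{f}, E_{S, t} \evec{g} \rangle$. Here~\eqref{eqn:keyS} with $r = t$ shows $S\bigl( [ 0, t ] \bigr) E_{S, t} \evec{g} = E_{S, t} \evec{g}$, so that $\langle \evec{f}, E_{S, t} \evec{g} \rangle = \langle S\bigl( [ 0, t ] \bigr) \evec{f}, E_{S, t} \evec{g} \rangle$. The resolution $\evec{f} = \int_{[ 0, \infty ]} S( \rd s ) \evec{f|_{[ 0, s )}} \otimes \Gamma_s \Gamma_s^* \evec{f}$ from the proof of Corollary~\ref{cor:key}, together with~\eqref{eqn:keyS}, expresses $S\bigl( [ 0, t ] \bigr) \evec{f}$ as the stop-time integral over $[ 0, t ]$ of $\evec{f|_{[ 0, s )}} \otimes \evec{f|_{[ s, \infty )}}$, whose associated future-adapted vector is $\Gamma_s \Gamma_s^* \evec{f} = \evec{1_{[ s, \infty )} f}$. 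Feeding this and $E_{S, t} \evec{g}$ into~\eqref{eqn:keyip} and using $\langle \evec{1_{[ s, \infty )} f}, \evec{0} \rangle = 1$ yields $\langle \evec{f}, E_{S, t} \evec{g} \rangle = S^{f, g}( [ 0, t ] )$; the two kernels therefore coincide.

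With these identities in hand, well-definedness and contractivity follow from a Gram-matrix argument: for any finite family $f_1, \ldots, f_n \in \elltwo$ the vectors $u_k := \evec{f_k} - E_{S, t} \evec{f_k}$ satisfy $\langle u_j, u_k \rangle = \exp\langle f_j, f_k \rangle - S^{f_j, f_k}( [ 0, t ] )$, so this matrix is positive semidefinite, which is exactly the estimate $\| \sum_k c_k E_{S, t} \evec{f_k} \| \le \| \sum_k c_k \evec{f_k} \|$ needed to extend $E_{S, t}$ to a contraction on $\fock$. The projection property then drops out on $\evecs$: self-adjointness is the conjugate symmetry $\langle \evec{f}, E_{S, t} \evec{g} \rangle = S^{f, g}( [ 0, t ] ) = \overline{S^{g, f}( [ 0, t ] )} = \overline{\langle \evec{g}, E_{S, t} \evec{f} \rangle}$, while $\langle E_{S, t} \evec{f}, E_{S, t} \evec{g} \rangle = S^{f, g}( [ 0, t ] ) = \langle E_{S, t} \evec{f}, \evec{g} \rangle$ gives $E_{S, t}^* ( E_{S, t} - I ) = 0$ on passing to the closure; hence $E_{S, t} = E_{S, t}^* = E_{S, t}^2$.

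For the final assertions I would apply~\eqref{eqn:keyS} once more: for $s \le t$ it gives $S\bigl( [ 0, s ] \bigr) E_{S, t} = E_{S, s}$ on $\evecs$, hence everywhere, and the case $t = \infty$ is the stated identity $S\bigl( [ 0, s ] \bigr) E_{S, \infty} = E_{S, s}$. The lattice relation then follows formally: for $s \le t$, $E_{S, s} E_{S, t} = S\bigl( [ 0, s ] \bigr) E_{S, t} E_{S, t} = S\bigl( [ 0, s ] \bigr) E_{S, t} = E_{S, s} = E_{S, s \wedge t}$, and taking adjoints gives $E_{S, t} E_{S, s} = E_{S, s \wedge t}$, with the roles of $s$ and $t$ interchanged when $t \le s$. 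I expect the one genuine obstacle to be the evaluation of $\langle \evec{f}, E_{S, t} \evec{g} \rangle$: the integrand vectors there do not have the shape of those defining $E_{S, t} \evec{g}$, and the device of inserting $S\bigl( [ 0, t ] \bigr)$ (legitimate by~\eqref{eqn:keyS}) to bring both arguments into stop-time-integral form over the common interval $[ 0, t ]$, so that~\eqref{eqn:keyip} applies, is precisely what makes both the contraction estimate and idempotency fall out; everything else is bookkeeping.
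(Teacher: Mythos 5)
Your proposal is correct, and although it constructs $E_{S,t}$ from the same stop-time integral (Lemma~\ref{lem:key}/Corollary~\ref{cor:key} with $F \equiv \evec{0}$), it verifies the operator properties by a genuinely different route. The paper works with the Riemann-sum approximants: since $E_s$ commutes with $S\bigl( ( r, s ] \bigr)$ whenever $r < s$, each $E_{S, \pi} = \sum_j S\bigl( ( \pi_{j - 1}, \pi_j ] \bigr) E_{\pi_j}$ is itself an orthogonal projection, so contractivity is inherited by the limit, self-adjointness and idempotency hold weakly on $\evecs$ (using the norm convergence of $E_{S, \pi} \evec{f}$) and hence everywhere, and both $E_{S, s} E_{S, t} = E_{S, s \wedge t}$ and $S\bigl( [ 0, s ] \bigr) E_{S, \infty} = E_{S, s}$ follow by comparing a partition $\pi$ with its truncation $\pi \cap [ 0, s ]$. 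You never touch the approximants, nor the commutation of $E_s$ with the increments of $S$, which is the paper's one structural observation; instead you extract the kernel identities $\langle E_{S, t} \evec{f}, E_{S, t} \evec{g} \rangle = S^{f, g}\bigl( [ 0, t ] \bigr) = \langle \evec{f}, E_{S, t} \evec{g} \rangle$ from \eqref{eqn:keyip}, \eqref{eqn:keyS} and the resolution $\evec{f} = \int_{[ 0, \infty ]} S( \rd s ) \evec{f|_{[ 0, s )}} \otimes \Gamma_s \Gamma_s^* \evec{f}$, after which everything is linear algebra: the Gram matrix for contractivity, conjugate symmetry of $S^{f, g}$ for self-adjointness, $E_{S, t}^*( E_{S, t} - I ) = 0$ for idempotency, and $S\bigl( [ 0, s ] \bigr) E_{S, t} = E_{S, s}$ together with adjoints for the lattice relations. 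Your key manoeuvre --- inserting $S\bigl( [ 0, t ] \bigr)$ so that both arguments of the pairing become stop-time integrals over $[ 0, t ]$, with integrand inner product $\langle \evec{1_{[ s, \infty )} f}, \evec{0} \rangle = 1$ --- is sound, and the resolution you quote from Corollary~\ref{cor:key} involves no circularity: for that particular integrand every Riemann sum collapses to $S\bigl( ( 0, \infty ] \bigr) \evec{f} = \evec{f}$. The trade-off: the paper's argument is shorter once the commutation observation is made, whereas yours records explicitly that the matrix elements of $E_{S, t}$ are given by the total mass $S^{f, g}\bigl( [ 0, t ] \bigr)$ --- a formula the paper never isolates at this stage, but which effectively reappears in the proof of Theorem~\ref{thm:SstarT}, where it is re-derived by partition arguments.
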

\begin{proof}
Let
$E_{S, \pi} := %
\sum_{j = 1}^{n + 1} S\bigl( ( \pi_{j - 1}, \pi_j ] \bigr) E_{\pi_j}$,
where $\pi = \{ 0 = \pi_0 < \cdots < \pi_{n + 1} = t \}$ is a
finite partition of $[ 0, t ]$. Note that, as $E_s$ commutes with
$S\bigl( ( r, s ] \bigr)$ whenever $r < s$, so $E_{S, \pi}$ is self
adjoint and idempotent, and therefore an orthogonal projection. Define
$E_{S, t}$ on $\evecs$ by using Lemma~\ref{lem:key} with
$F \equiv \evec{0}$ to let
\[
E_{S, t} \evec{f} := %
\int_{[ 0, t ]} S( \rd s ) \evec{f|_{[ 0, s )}} \otimes %
\evec{0|_{[ s, \infty )}} = %
\lim_\pi E_{S, \pi} \evec{f} \qquad \text{for all } f \in \elltwo
\]
and extending by linearity. Since each $E_{S, \pi}$ is a contraction,
so is $E_{S, t}$, hence $E_{S, t}$ extends by continuity to the whole
of $\fock$; self-adjointness and idempotency hold weakly on $\evecs$,
so everywhere.

For the second claim, suppose without loss of generality that
$s = \pi_{m + 1}$ for some $m \ge 1$ and
let~$\pi' = \{ 0 = \pi_0 < \cdots < \pi_{m + 1} = s \}$. Then
$E_{S, s} E_{S, t} = \lim_\pi E_{S, \pi'} E_{S, \pi} = %
\lim_\pi E_{S, \pi'} = E_{S, s}$
and $S\bigl( [ 0, s ] \bigr) E_{S, \infty} = %
\lim_\pi S\bigl( [ 0, s ] \bigr) E_{S, \pi} = %
\lim_\pi E_{S, \pi'} = E_{S, s}$,
in the weak and strong senses on $\evecs$ respectively, which gives
the result.
\end{proof}
 
\begin{theorem}\label{thm:shift}
Let $S$ be a finite quantum stop time. There exists an isometry
$\Gamma_S \in \bop{\fock}{}$ such that
\[
\Gamma_S x = %
\int_{[ 0, \infty ]} S( \rd s ) \Gamma_s x := %
\int_{[ 0, \infty ]} S( \rd s ) \evec{0|_{[ 0, s )}} \otimes %
\Gamma_s x \qquad \text{for all } x \in \fock,
\]
where $\Gamma_s$ is taken to have range
$\fock_{[s} \cong \evec{0|_{[ 0, s )}} \otimes \fock_{[s}$.
\end{theorem}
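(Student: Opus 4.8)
The plan is to construct $\Gamma_S$ directly as a stop-time integral, reading off its defining vectors from Lemma~\ref{lem:key}. Fix $x \in \fock$ and set $F : \R_+ \to \fock$, $F( s ) := \Gamma_s x$. First I would verify that the data $( J, f, F ) = ( \R_+, 0, F )$ satisfy the hypotheses of Lemma~\ref{lem:key}. Future adaptedness holds because $\Gamma_s$ has range $\fock_{[s}$: on an exponential vector $\Gamma_s \evec{g} = \evec{\theta_s g}$, and $\theta_s g$ vanishes on $[ 0, s )$, so $\Gamma_s \evec{g} = \evec{0|_{[ 0, s )}} \otimes \evec{( \theta_s g )|_{[ s, \infty )}}$; extending by linearity and continuity gives $F( s ) = \evec{0|_{[ 0, s )}} \otimes F_s$ with $F_s \in \fock_{[s}$. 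Boundedness is immediate, with $\| F \|_\infty = \| x \|$, since each $\Gamma_s$ is isometric; and $F$ is continuous on $[ 0, \infty )$ by the strong continuity of $s \mapsto \Gamma_s$ recorded earlier. Lemma~\ref{lem:key} (with $t = \sup \R_+ = \infty$) then produces the vector
\[
\Gamma_S x :=
\int_{[ 0, \infty ]} S( \rd s ) \evec{0|_{[ 0, s )}} \otimes \Gamma_s x
\in \fock
\]
for every $x \in \fock$.

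Next I would establish the two algebraic properties needed. Linearity of $x \mapsto \Gamma_S x$ is inherited from the linearity of $x \mapsto \Gamma_s x$ together with the linearity of the approximating sums in~\eqref{eqn:mixint} in the adapted integrand. For the isometry property I would appeal to the inner-product identity~\eqref{eqn:keyip}, applied with $f = g = 0$, $F( s ) = \Gamma_s x$ and $G( s ) = \Gamma_s y$. Since $\Gamma_s^* \Gamma_s = I$, the integrand is the constant $\langle F( s ), G( s ) \rangle = \langle \Gamma_s x, \Gamma_s y \rangle = \langle x, y \rangle$; moreover, when $f = g = 0$ the exponential weight in the definition of $S^{f, g}$ equals $1$, so $S^{0, 0}( A ) = \langle \evec{0}, S( A ) \evec{0} \rangle$. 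Hence
\[
\langle \Gamma_S x, \Gamma_S y \rangle =
\langle x, y \rangle \, S^{0, 0}\bigl( [ 0, \infty ] \bigr) =
\langle x, y \rangle \,
\langle \evec{0}, S\bigl( [ 0, \infty ] \bigr) \evec{0} \rangle =
\langle x, y \rangle,
\]
using $S\bigl( [ 0, \infty ] \bigr) = I$ and $\| \evec{0} \| = 1$. Thus $\Gamma_S$ is isometric, in particular bounded, so $\Gamma_S \in \bop{\fock}{}$.

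The one point that needs genuine care is the behaviour at the endpoint $s = \infty$, where $\Gamma_\infty = E_0$ fails to be isometric and the partitions used in~\eqref{eqn:mixint} carry the terminal node $\pi_{n + 1} = \infty$. Here finiteness of $S$ does the work: since $S\bigl( \{ \infty \} \bigr) = 0$ we have $S^{0, 0}\bigl( \{ \infty \} \bigr) = 0$, so the non-isometric value at $\infty$ never contributes to the integrand, and the tail correction $S\bigl( ( \pi_n, \infty ) \bigr) \evec{0}$ appearing in the convergence estimate of Lemma~\ref{lem:key} vanishes as $\pi$ is refined, by countable additivity of $A \mapsto \langle \evec{0}, S( A ) \evec{0} \rangle$. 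I expect this to be the only delicate step; everything else is a direct assembly of Lemma~\ref{lem:key} and the identity~\eqref{eqn:keyip}.
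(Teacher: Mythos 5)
Your proposal is correct and is essentially the paper's own proof: the paper also constructs $\Gamma_S x$ by applying Lemma~\ref{lem:key} with $J = \R_+$, $f = 0$ and $F = \Gamma x : s \mapsto \Gamma_s x$, differing only in that it establishes isometry by checking that each Riemann sum $\Gamma_{S, \pi} = \sum_{j} S( ( \pi_{j - 1}, \pi_j ] ) \Gamma_{\pi_j}$ is itself an isometry (via adaptedness of $S$ and orthogonality of the spectral increments) and then passing to the strong limit, whereas you invoke the inner-product identity~\eqref{eqn:keyip} with $f = g = 0$ together with $S^{0, 0}( [ 0, \infty ] ) = 1$. Both arguments dispose of the non-isometric endpoint value $\Gamma_\infty = E_0$ exactly as you do, through the finiteness hypothesis $S( \{ \infty \} ) = 0$.
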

\begin{proof}
Let
$\Gamma_{S, \pi} := \sum_{j = 1}^{n + 1} %
S\bigl( ( \pi_{j - 1}, \pi_j ] \bigr) \Gamma_{\pi_j}$,
where $\pi = \{ 0 = \pi_0 < \cdots < \pi_{n + 1} = \infty \}$ is a
finite partition of $[ 0, \infty ]$. For all $x$, $y \in \fock$,
\[
\langle \Gamma_{S, \pi} x, \Gamma_{S, \pi} x \rangle = %
\sum_{j = 1}^{n + 1} \langle \Gamma_{\pi_j} x, %
S\bigl( ( \pi_{j - 1}, \pi_j ] \bigr) \Gamma_{\pi_j} y \rangle = %
\sum_{j = 1}^{n + 1} %
\| S\bigl( ( \pi_{j - 1}, \pi_j ] \bigr) \evec{0} \|^2 %
\langle \Gamma_{\pi_j} x, \Gamma_{\pi_j} y \rangle = %
\langle x, y \rangle,
\]
so $\Gamma_{S, \pi}$ is an isometry. Since $\Gamma_{S, \pi} x$
converges to
$\int_{[ 0, \infty ]} S( \rd s ) \evec{0|_{[ 0, s )}} \otimes %
\Gamma_s x$
as $\pi$ is refined, applying Lemma~\ref{lem:key} with~$J = \R_+$,
$f = 0$ and $F = \Gamma x : s \mapsto \Gamma_s x$ gives the claim.
\end{proof}

\begin{definition}
Let $S$ be a finite quantum stop time, let $E_S := E_{S, \infty}$ and
let $\im T$ denote the range of the linear operator $T$. The
\emph{pre $S$ space}
\[
\fock_{S)} := \im E_S = \bigcup_{t \in ( 0, \infty ]} \im E_{S, t},
\]
and the \emph{post $S$ space} $\fock_{[S} := \im \Gamma_S$, with
identity operators $I_{S)}$ and $I_{[S}$ respectively. If $S$ is
deterministic, \ie $S\bigl( \{ t \} \bigr) = I$ for some
$t \in ( 0, \infty )$, then
$\fock_{[S} = \evec{0|_{[ 0, t )}} \otimes \fock_{[t}$, which, as
noted above, may
naturally be identified with $\fock_{[t}$.
\end{definition}

\begin{theorem}\label{thm:isom}
Let $S$ be a finite quantum stop time. There exists an isometric
isomorphism $j_S : \fock_{S)} \otimes \fock_{[S} \to \fock $ such that
\[
j_S\bigl( E_{S, t} \evec{f} \otimes \Gamma_S x \bigr) = %
\int_{[ 0, t ]} S( \rd s ) \evec{f|_{[ 0, s )}} \otimes \Gamma_s x %
\quad \text{for all } t \in ( 0, \infty ], \ f \in \elltwo %
\text{ and } x \in \fock.
\]
\end{theorem}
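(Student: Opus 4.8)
The plan is to define $j_S$ on the product vectors appearing in the statement, check that this prescription yields a well-defined isometry on a dense domain, extend by continuity, and finally prove surjectivity using the resolution of the identity noted in the proof of Corollary~\ref{cor:key}. First I would record the target vectors: for $t \in ( 0, \infty ]$, $f \in \elltwo$ and $x \in \fock$ set
\[
w( t, f, x ) := \int_{[ 0, t ]} S( \rd s ) \evec{f|_{[ 0, s )}} \otimes \Gamma_s x,
\]
which exists by Lemma~\ref{lem:key} (or Corollary~\ref{cor:key} when $t = \infty$) applied to the bounded, future-adapted function $s \mapsto \evec{0|_{[ 0, s )}} \otimes \Gamma_s x$, continuous on $[ 0, t ]$ by strong continuity of $\Gamma$.

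The heart of the matter is the single product-vector identity
\[
\langle w( t, f, x ), w( t', g, y ) \rangle = \langle E_{S, t} \evec{f}, E_{S, t'} \evec{g} \rangle \, \langle x, y \rangle.
\]
To prove it I would assume $t \le t'$ without loss of generality. Since $S\bigl( [ 0, t ] \bigr)$ is a self-adjoint projection and, by~\eqref{eqn:keyS}, fixes $w( t, f, x )$ while sending $w( t', g, y )$ to $w( t, g, y )$, the left-hand side equals $\langle w( t, f, x ), w( t, g, y ) \rangle$. Now~\eqref{eqn:keyip}, with the future-adapted integrands $F( s ) = \evec{0|_{[ 0, s )}} \otimes \Gamma_s x$ and $G( s ) = \evec{0|_{[ 0, s )}} \otimes \Gamma_s y$, gives $\int_{[ 0, t ]} \langle x, y \rangle \, S^{f, g}( \rd s ) = \langle x, y \rangle \, S^{f, g}\bigl( [ 0, t ] \bigr)$, because $\Gamma_s$ is isometric and $\| \evec{0|_{[ 0, s )}} \| = 1$. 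On the other hand Theorem~\ref{thm:expS} gives $\langle E_{S, t} \evec{f}, E_{S, t'} \evec{g} \rangle = \langle E_{S, t} \evec{f}, E_{S, t} \evec{g} \rangle$, and applying~\eqref{eqn:keyip} to the defining integrals of $E_{S, t}$ (with $F \equiv G \equiv \evec{0}$) evaluates this to $S^{f, g}\bigl( [ 0, t ] \bigr)$, matching the left-hand side.

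From this identity both well-definedness and the isometry property follow at once: by sesquilinearity $\bigl\| \sum_i c_i \, w( t_i, f_i, x_i ) \bigr\|^2 = \bigl\| \sum_i c_i \, E_{S, t_i} \evec{f_i} \otimes \Gamma_S x_i \bigr\|^2$ for every finite family, so the assignment $\sum_i c_i \, E_{S, t_i} \evec{f_i} \otimes \Gamma_S x_i \mapsto \sum_i c_i \, w( t_i, f_i, x_i )$ is single-valued and inner-product preserving on the linear span of the product vectors. Since the vectors $E_S \evec{f}$ are total in $\fock_{S)} = \im E_S$ (as $\evecs$ is total in $\fock$ and $E_S$ is a projection) and $\{ \Gamma_S x : x \in \fock \} = \im \Gamma_S = \fock_{[S}$, this span is dense in $\fock_{S)} \otimes \fock_{[S}$, and $j_S$ extends uniquely to an isometry there.

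The main obstacle is surjectivity. Here I would start from the resolution of the identity $\evec{f} = \int_{[ 0, \infty ]} S( \rd s ) \evec{f|_{[ 0, s )}} \otimes \Gamma_s \Gamma_s^* \evec{f}$, whose future-adapted integrand $F( s ) = \Gamma_s \Gamma_s^* \evec{f}$ is continuous on $[ 0, \infty ]$. For a partition $\pi$ I would approximate $F$ by the Borel, bounded, future-adapted step function $F^\pi( s ) := \Gamma_s \Gamma_{\pi_j}^* \evec{f}$ on $( \pi_{j - 1}, \pi_j ]$; the norm estimate from~\eqref{eqn:keyip} yields $\bigl\| \evec{f} - \int_{[ 0, \infty ]} S( \rd s ) \evec{f|_{[ 0, s )}} \otimes F^\pi_s \bigr\|^2 = \int_{[ 0, \infty ]} \| F( s ) - F^\pi( s ) \|^2 \, S^{f, f}( \rd s )$, and $\| F( s ) - F^\pi( s ) \| = \| ( \Gamma_s^* - \Gamma_{\pi_j}^* ) \evec{f} \|$ becomes uniformly small as the mesh shrinks. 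But by~\eqref{eqn:keyS} and linearity $\int_{[ 0, \infty ]} S( \rd s ) \evec{f|_{[ 0, s )}} \otimes F^\pi_s = \sum_j j_S\bigl( ( E_{S, \pi_j} - E_{S, \pi_{j - 1}} ) \evec{f} \otimes \Gamma_S \evec{\theta_{\pi_j}^* f} \bigr) \in \im j_S$, so $\evec{f}$ lies in the closed subspace $\im j_S$; as such vectors are total, $j_S$ is onto and hence an isometric isomorphism. The delicate point is the uniform control of $\| F - F^\pi \|$ near $s = \infty$, where $\Gamma_s$ is only strongly continuous on $[ 0, \infty )$: this is handled by noting that $\Gamma_s^* \evec{f} \to \evec{0}$ as $s \to \infty$, so the tail contribution is small once the partition reaches far enough out.
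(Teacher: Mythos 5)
Your proof is correct, and its first half coincides with the paper's: you establish well-definedness and isometry of $j_S$ on product vectors via \eqref{eqn:keyip} together with $E_{S,t}E_{S,t'} = E_{S, t \wedge t'}$ from Theorem~\ref{thm:expS} (you are in fact slightly more careful than the paper, which writes the inner-product identity only for equal times), and surjectivity reduces in both cases to showing that each $\evec{f}$ is a limit of the same approximants $\sum_j j_S\bigl( ( E_{S, \pi_j} - E_{S, \pi_{j-1}} ) \evec{f} \otimes \Gamma_S \evec{\theta_{\pi_j}^* f} \bigr)$, with the same two error sources (the translation modulus of $s \mapsto \Gamma_s^* \evec{f}$, which is the paper's $\eta$, and the tail beyond the last finite partition point). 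Where you genuinely diverge is the mechanism for proving that convergence: the paper works with bare Riemann sums, refining a partition $\pi'$ against a fixed $\pi$ to produce a remainder $R_\pi$ and then estimating $\| R_\pi \|$ (a two-stage limit), whereas you interpolate the Borel step integrand $F^\pi( s ) = \Gamma_s \Gamma_{\pi_j}^* \evec{f}$, invoke Corollary~\ref{cor:key}, and read off the error exactly as $\int_{[ 0, \infty ]} \| F( s ) - F^\pi( s ) \|^2 \, S^{f, f}( \rd s )$ from \eqref{eqn:keyip}. This buys a cleaner, one-shot error identity and eliminates the nested refinement bookkeeping; the price is that you must know the Borel-integrand integral is linear in the integrand and localises via \eqref{eqn:keyS}, so that $\int_{[ 0, \infty ]} S( \rd s ) \evec{f|_{[ 0, s )}} \otimes F^\pi_s$ really equals the displayed element of $\im j_S$ --- this is true, but it requires testing against the total family of integrals supplied by the uniqueness part of Corollary~\ref{cor:key} (which contains $\evecs$), a verification that your phrase ``by \eqref{eqn:keyS} and linearity'' compresses. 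Your treatment of the $s = \infty$ regime is also sound: refinements only enlarge the last finite partition point, so choosing it large in the initial partition controls the tail, exactly as the paper's term $2 \| S\bigl( ( \pi_n, \infty ) \bigr) \evec{f} \|$ does.
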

\begin{proof}
Let $t \in ( 0, \infty ]$, $f$, $g \in \elltwo$ and
$x$, $y \in \fock$. Lemma~\ref{lem:key} and Theorem~\ref{thm:expS}
imply that
\begin{align*}
\langle \int_{[ 0, t ]} S( \rd s ) \evec{1_{[ 0, s )} f} \otimes %
\Gamma_s x, \int_{[ 0, t ]} S( \rd s ) \evec{1_{[ 0, s )} g} %
\otimes \Gamma_s y \rangle & = %
\int_{[ 0, t ]} \langle \Gamma_s x, \Gamma_s y \rangle %
\, S^{f, g}( \rd s ) \\[1ex]
 & = \langle E_{S, t} \evec{f}, E_{S, t} \evec{f'} \rangle %
\langle x, y \rangle.
\end{align*}
Hence $j_S$ is well defined and extends to an isometry
from~$\fock_{S)} \otimes \fock_{[S}$ to $\fock$.

To conclude, it suffices to show that $\evec{f} \in \im j_S$ for any
$f \in \elltwo$. Let $\pi'$ be a refinement of the partition
$\pi = \{ 0 = \pi_0 < \cdots < \pi_{n + 1} = \infty \}$ and, for
$j = 0$, \ldots, $n$, let $k_j \ge j$ be such
that~$\pi'_{k_j} = \pi_j$ and let $l_j \ge 1$ be such that
$\pi'_{k_j + l_j} = \pi_{j + 1}$. Then
\[
\evec{f} = %
\sum_{j = 0}^n \sum_{l = 1}^{l_j} %
S\bigl( ( \pi'_{k_j + l - 1}, \pi'_{k_j + l} ] \bigr) %
\evec{1_{[ 0, \pi'_{k_j + l} )} f + \theta_{\pi'_{k_j + l}} %
\theta^*_{\pi'_{k_j + l}} f}
\]
and, as $\pi'$ is refined with $\pi$ fixed,
\begin{multline*}
\sum_{j = 0}^n \sum_{l = 1}^{l_j} %
S\bigl( ( \pi'_{k_j + l - 1}, \pi'_{k_j + l} ] \bigr) %
\bigl( \evec{1_{[ 0, \pi'_{k_j + l} )} f + \theta_{\pi'_{k_j + l}} %
\theta^*_{\pi'_{k_j + l}} f} - %
\evec{1_{[ 0, \pi'_{k_j + l} )} f + \theta_{\pi'_{k_j + l}} %
\theta^*_{\pi'_{k_j + l_j}} f} \bigr) \\
\to R_\pi := \evec{f} - %
\sum_{j = 0}^n j_S\bigl( ( E_{S, \pi_{j + 1}} - %
E_{S, \pi_j} ) \evec{f} \otimes \Gamma_S %
\evec{\theta_{\pi_{j + 1}}^* f} \bigr);
\end{multline*}
note that $s \mapsto \Gamma_s \evec{\theta_{\pi_{j + 1}}^* f}$ is
continuous on $[ 0, \pi_{j + 1} ]$ for $j = 0$, \ldots, $n$. If $r$,
$s$, $u \in \R_+$ are such that $r < s \le u$ then
\begin{multline*}
\| S\bigl( ( r, s ] \bigr) \bigl( %
\evec{1_{[ 0, s )} f + \theta_s \theta^*_s f} - %
\evec{1_{[ 0, s )} f + \theta_s \theta^*_u f} \bigr) \| \\[1ex]
 \le \| S\bigl( ( r, s ] \bigr) \evec{f} \| \, %
\| \evec{1_{[ s, \infty )} f} - %
\evecc\bigl( 1_{[ s, \infty )} f( \cdot + u - s ) \bigr) \|
\end{multline*}
and, for all $h \ge 0$,
\[
\| \evec{1_{[ s, \infty )} f} - %
\evecc\bigl( 1_{[ s, \infty )} f( \cdot + h ) \bigr) \|^2 \le %
\eta( h ) := 2 \exp\bigl( \| f \|^2 \bigr) %
\bigl( \exp\bigl( \| f \| \, %
\| f - f( \cdot + h ) \| \bigr) - 1 \bigr).
\]
Letting
$\delta_\pi := \max\{ \pi_j - \pi_{j - 1} : j = 1, \ldots, n \}$ and
refining $\pi$, it follows that
\[
\| R_\pi \| \le \| S\bigl( [ 0, \pi_n ] \bigr) \evec{f} \| %
\sup\{ \eta( h )^{1 / 2} : 0 \le h \le \delta_\pi \} + %
2 \| S\bigl( ( \pi_n, \infty ) \bigr) \evec{f} \| \to 0.
\qedhere
\]
\end{proof}

\begin{remark}
Use of the time projection $E_S$ goes back at least as far as
\cite[Theorem~2.3]{BaT90}, and the existence of the stopped right
shift $\Gamma_S$ was first established in \cite[Theorem~5.1]{PaS87}.
The abstract strong Markov property, Theorem~\ref{thm:isom}, is
\cite[Theorem~5.2]{PaS87}.
\end{remark}

\section{Stop-time convolution}

\begin{definition}
Let $S$ and $T$ be finite quantum stop times. There exists a unique
spectral measure $S \otimes T$ on $\borel( [ 0, \infty ]^2 )$ such
that
\[
( S \otimes T )( A \times B ) = %
j_S( S( A ) \otimes \Gamma_S T( B ) \Gamma_S^* ) j_S^* %
\qquad \text{for all } A, B \in \borel[ 0, \infty ],
\]
where $S( A )$ is considered to act on $\fock_{S)}$, so setting
\[
( S \star T )( C ) := ( S \otimes T )%
\bigl( \{ ( s, t ) \in [ 0, \infty ] : s + t \in C \} \bigr) %
\qquad \text{for all } C \in \borel[ 0, \infty ]
\]
gives a finite spectral measure $S \star T$, their \emph{convolution}.

The convolution of spectral measures was introduced by Fox
\cite{Fox76} and first applied to quantum stop times
in~\cite[Proposition~7.1]{PaS87}.
\end{definition}

\begin{theorem}\label{thm:SstarT}
The convolution of finite quantum stop times $S$ and $T$ is such that
\begin{equation}\label{eqn:SstarTint}
( S \star T )\bigl( [ 0, u ] \bigr) \evec{f} = %
\int_{[ 0, u ]} S( \rd s ) \evec{f|1_{[ 0, s )} } \otimes %
\Gamma_s T\bigl( [ 0, u - s ] \bigr) \Gamma_s^* \evec{f}
\end{equation}
for all $u \in [ 0, \infty ]$ and $f \in \elltwo$. Consequently,
$S \star T$ is a finite quantum stop time such that
\[
E_{S \star T} \evec{f} = %
\int_{[ 0, \infty ]} S( \rd s ) \evec{f|_{[ 0, s )}} \otimes %
\Gamma_{s} E_T \Gamma_s^* \evec{f}
\]
and
\begin{equation}\label{eqn:jSstarT}
j_{S \star T}%
( E_{S \star T} \evec{f} \otimes \Gamma_{S \star T} y ) = %
\int_{[ 0, \infty ]} S( \rd s ) \evec{f|_{[ 0, s )}} \otimes %
\Gamma_s %
\int_{[ 0, \infty ]} T( \rd t ) \evec{f( \cdot + s )|_{[ 0, t )}} %
\otimes \Gamma_t y
\end{equation}
for all $f \in \elltwo$ and $y \in \fock$.
\end{theorem}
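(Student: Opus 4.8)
The plan is to establish the key integral formula \eqref{eqn:SstarTint} first, since the remaining assertions (that $S \star T$ is a finite quantum stop time, the formula for $E_{S \star T}$, and the map $j_{S \star T}$ in \eqref{eqn:jSstarT}) should follow from it by specialisation together with the abstract machinery already set up in Theorems~\ref{thm:expS}, \ref{thm:shift} and~\ref{thm:isom}. To prove \eqref{eqn:SstarTint}, I would unwind the definition of $(S \star T)\bigl( [ 0, u ] \bigr)$ as the push-forward spectral measure applied to the set $\{ (s,t) : s + t \le u \}$, and then transport everything through the isometric isomorphism $j_S$ of Theorem~\ref{thm:isom}. Concretely, for $A \in \borel[ 0, \infty ]$ the operator $(S \otimes T)(A \times B) = j_S\bigl( S(A) \otimes \Gamma_S T(B) \Gamma_S^* \bigr) j_S^*$, so I would slice the triangular region $\{ s + t \le u \}$ into the fibres $t \le u - s$ and integrate the $S$-marginal against $\Gamma_s T\bigl( [ 0, u - s ] \bigr) \Gamma_s^*$, recovering the right-hand side of \eqref{eqn:SstarTint} once $j_S$ is applied to an exponential vector via the defining formula $j_S\bigl( E_{S,t} \evec{f} \otimes \Gamma_S x \bigr) = \int_{[0,t]} S( \rd s ) \evec{f|_{[0,s)}} \otimes \Gamma_s x$.

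Once \eqref{eqn:SstarTint} is in hand, the claim that $S \star T$ is a finite quantum stop time is essentially bookkeeping: it is a spectral measure by construction (being a push-forward of the spectral measure $S \otimes T$), its total mass is $(S \star T)\bigl( [ 0, \infty ] \bigr) = I$ because $S$ and $T$ are finite, and the adaptedness condition (iv) of Definition~\ref{def:qst} should be read off from the integral representation, since the integrand at parameter $s$ lives in $\bop{\fock_{s)}}{} \otimes I_{[s}$ tensored against a shifted $T$-contribution supported after time~$s$. The formula for $E_{S \star T} \evec{f}$ is then obtained by taking $u = \infty$ in \eqref{eqn:SstarTint} and applying the projection $E_{S \star T}$; using $E_{S \star T} = E_{S \star T, \infty}$ and the relation $E_{S,t} \evec{g} = \int_{[0,t]} S( \rd s ) \evec{g|_{[0,s)}} \otimes \evec{0|_{[s,\infty)}}$ from Theorem~\ref{thm:expS}, the inner $T$-factor $\Gamma_s T\bigl( [ 0, \infty ] \bigr) \Gamma_s^* \evec{f}$ collapses to $\Gamma_s E_T \Gamma_s^* \evec{f}$.

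The genuinely substantive step, and where I expect the main obstacle to lie, is the identity \eqref{eqn:jSstarT} for $j_{S \star T}$. Here one must show that iterating the stop-time integral — first integrating $T$ against the shifted test function $f( \cdot + s )$ and the target vector $y$, then integrating $S$ against $f$ — reproduces the single $j_{S \star T}$-image of $E_{S \star T} \evec{f} \otimes \Gamma_{S \star T} y$. I would prove this by approximating both sides through finite partitions (as in the proofs of Theorems~\ref{thm:expS} and~\ref{thm:shift}), computing the inner product of each side with a second vector of the same form and invoking \eqref{eqn:keyip} to reduce matters to the scalar measures $S^{f,g}$ and the corresponding $T$-measures. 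The delicate point is tracking how the shift $\Gamma_s$ interacts with the inner $T$-integral: the shifted argument $f(\cdot + s)$ is exactly $\theta_s^* f$ restricted appropriately, and one needs the semigroup relations $E_s \Gamma_t = E_0$ and $E_t \Gamma_s = \Gamma_s E_{t-s}$ recorded earlier to align the two iterated integrals with the single convolved integral. I expect the Fubini-type interchange of the nested $S$- and $T$-integrations, justified on exponential vectors where everything is absolutely convergent, to be the technical crux; once the inner products agree on the total set $\evecs$, the isometry $j_{S \star T}$ is determined and the identity follows.
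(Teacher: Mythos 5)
Your overall strategy---prove \eqref{eqn:SstarTint} first by unwinding the definition of $S \star T$ through $j_S$ with partition approximations, verify adaptedness from the factorised form of the integrand, and obtain \eqref{eqn:jSstarT} by computing inner products of Riemann sums on exponential vectors and reducing to the scalar measures $S^{f,g}$ and the shifted $T$-measures---is the paper's strategy, and for those two parts your plan is sound: the paper makes the first step precise by expanding $\evec{f} = \lim_\pi \sum_j j_S\bigl( ( E_{S, \pi_j} - E_{S, \pi_{j-1}} ) \evec{f} \otimes \Gamma_S \evec{\theta_{\pi_j}^* f} \bigr)$ (taken from the proof of Theorem~\ref{thm:isom}), and in the last step it identifies the inner Riemann sums as $I_{T, \pi - s}\bigl( g( \cdot + s ), \Gamma y \bigr)$ in the notation of Lemma~\ref{lem:key}.

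However, your derivation of the formula for $E_{S \star T} \evec{f}$ contains a genuine error. Taking $u = \infty$ in \eqref{eqn:SstarTint} gives nothing: since $T\bigl( [ 0, \infty ] \bigr) = I$, the right-hand side becomes $\int_{[ 0, \infty ]} S( \rd s ) \evec{f|_{[ 0, s )}} \otimes \Gamma_s \Gamma_s^* \evec{f} = \evec{f}$, which is just the identity used in the proof of Corollary~\ref{cor:key}; in particular the factor $\Gamma_s T\bigl( [ 0, \infty ] \bigr) \Gamma_s^*$ equals $\Gamma_s \Gamma_s^*$, and there is no mechanism by which it can ``collapse'' to $\Gamma_s E_T \Gamma_s^*$ once $u = \infty$ has been fixed. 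The point is that $E_{S \star T} = E_{S \star T, \infty}$ is built, via Theorem~\ref{thm:expS}, from the Riemann sums $E_{S \star T, \pi} = \sum_j ( S \star T )\bigl( ( \pi_{j-1}, \pi_j ] \bigr) E_{\pi_j}$, so one must use \eqref{eqn:SstarTint} at the \emph{finite} partition points $\pi_{j-1}$, $\pi_j$, push each $E_{\pi_j}$ inside the $S$-integral using the commutation relation $E_{\pi_j} \Gamma_s = \Gamma_s E_{\pi_j - s}$ (valid for $s \le \pi_j$), and recognise the resulting inner sum $\sum_j 1_{[ 0, \pi_j ]}( s ) \, T\bigl( [ ( \pi_{j-1} - s )_+, \pi_j - s ] \bigr) E_{\pi_j - s}$ as $E_{T, \pi - s}$ for the shifted partition $\pi - s$; the appearance of $E_T$ then comes from refining $\pi$, so that $E_{T, \pi - s} \to E_T$ in the appropriate sense, under the integral against $S^{f, g}$. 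This shifted-partition bookkeeping is exactly what the paper carries out and is the missing content of your middle step; as written, your argument for that formula would not get off the ground.
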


\begin{remark}
For any $f \in \elltwo$ and $y \in \fock$, the integral
$\int_{[ 0, \infty ]} T( \rd t ) \evec{f( \cdot + s )|_{[ 0, t )}} %
\otimes \Gamma_t y$ is the limit of a sequence of Riemann sums which
depend continuously on $s$. Furthermore, the norm of these integrals
is uniformly bounded above, by $\| \evec{f} \otimes y \|$, so the
iterated integral in~\eqref{eqn:jSstarT} is well defined.
\end{remark}

\begin{proof}[Proof of Theorem~\ref{thm:SstarT}]
Let $f$, $g \in \elltwo$ and let $u \in ( 0, \infty )$; the cases with
$u = 0$ and $u = \infty$ are clear. The proof of
Theorem~\ref{thm:isom} gives that
\[
\evec{f} = \lim_\pi %
\sum_{j = 1}^{n + 1} j_S\bigl( ( E_{S, \pi_j} - E_{S, \pi_{j - 1}} ) %
\evec{f} \otimes \Gamma_S \evec{\theta_{\pi_j}^* f} \bigr),
\]
where the limit is taken over finite partitions
$\pi = \{ 0 = \pi_0 < \cdots < \pi_{n + 1} = \infty \}$ ordered by
refinement; without loss of generality, $u = \pi_{m + 1}$ for some $m$
which depends on $\pi$. Furthermore,
\[
E_{S, t} - E_{S, s} = E_S S\bigl( ( s, t ] \bigr) = %
S\bigl( ( s, t ] \bigr) E_S %
\qquad \text{whenever } 0 \le s < t \le \infty
\]
It follows that
$\langle \evec{f}, %
( S \star T )\bigl( [ 0, u ] \bigr) \evec{g} \rangle$ is the limit, as
$\pi$ as refined, of
\begin{align*}
& \sum_{j, k = 1}^{n + 1} \iint\limits_{\quad 0 \le s + t \le u} \! %
\langle ( E_{S, \pi_j} - E_{S, \pi_{j - 1}} ) \evec{f} \otimes %
\evec{\theta_{\pi_j}^* f}, %
\bigl( S( \rd s ) \otimes T( \rd t ) \bigr)
( E_{S, \pi_k} - E_{S, \pi_{k - 1}} ) \evec{g} \otimes %
\evec{\theta_{\pi_k}^* g} \rangle \\[1ex]
 & = \sum_{j, k = 1}^{n + 1} %
\int_{[ 0, u ]} \langle S\bigl( ( \pi_{j - 1}, \pi_j ] \bigr) E_S %
\evec{f}, S\bigl( [ 0, u - t ] \bigr) %
S\bigl( ( \pi_{k - 1}, \pi_k ] \bigr) E_S \evec{g} \rangle %
\langle \Gamma_{\pi_j}^* \evec{f}, T( \rd t ) %
\Gamma_{\pi_k}^* \evec{g} \rangle \\[1ex]
 & = \sum_{j = 1}^{m + 1} \int_{( \pi_{j - 1}, \pi_j ]} %
\langle \evec{f}, S( \rd s ) %
S\bigl( ( \pi_{j - 1}, \pi_j ] \bigr) E_S \evec{g} \rangle %
\langle \evec{f}, \Gamma_{\pi_j} T\bigl( [ 0, u - s ] \bigr) %
\Gamma_{\pi_j}^* \evec{g} \rangle \\[1ex]
 & = \sum_{j = 1}^{m + 1} \int_{( \pi_{j - 1}, \pi_j ]} %
\langle \evec{f}, S( \rd s ) E_S \evec{g} \rangle %
\langle \evec{f}, \Gamma_{\pi_j} T\bigl( [ 0, u - s ] \bigr) %
\Gamma_{\pi_j}^* \evec{g} \rangle \\[1ex]
 & \to \int_{[ 0, u ]} %
\langle \evec{f}, S( \rd s ) E_S \evec{g} \rangle %
\langle \evec{f}, %
\Gamma_s T\bigl( [ 0, u - s ] \bigr) \Gamma_s^* \evec{g} \rangle.
\end{align*}
If $F : [ 0, u ] \to \C$ is continuous then
\[
\int_{[ 0, u ]} %
\langle \evec{f}, S( \rd s ) E_S \evec{g} \rangle F( s ) = %
\lim_\pi \sum_{j = 1}^{m + 1} \langle \evec{f}, %
S\bigl( ( \pi_{j - 1}, \pi_j ] \bigr) E_{\pi_j} \evec{g} \rangle %
F( \pi_j ) = %
\int_{[ 0, u ]} S^{f, g}( \rd s ) F( s ),
\]
so \eqref{eqn:SstarTint} now follows from Lemma~\ref{lem:key} and the
proof of Corollary~\ref{cor:key}. Furthermore, as $S$ and $T$ are
identity adapted, so is $S \star T$: note that if $s$, $u \in \R_+$
are such that $s \le u$ then
\[
\langle \evec{f}, %
\Gamma_s T\bigl( [ 0, u - s ] \bigr) \Gamma_s^* \evec{g} \rangle = %
\langle \evec{1_{[ 0, u )} f}, \Gamma_s T\bigl( [ 0, u - s ] \bigr) %
\Gamma_s^* \evec{1_{[ 0, u )} g} \rangle \langle %
\evec{1_{[ u, \infty )} f}, \evec{1_{[ u, \infty )} g} \rangle.
\]
Next, note that
\begin{align*}
\langle \evec{f}, E_{S \star T, \pi} \evec{g} \rangle & = %
\sum_{j = 1}^{n + 1} \int_{[ 0, \pi_j ]} \langle \evec{f}, \Gamma_s %
T\bigl( [ ( \pi_{j - 1} - s )_+, \pi_j - s ] \bigr) E_{\pi_j - s} %
\Gamma_s^* \evec{g} \rangle \, S^{f, g}( \rd s ) \\[1ex]
 & = \int_{[ 0, \infty ]} \langle \evec{f}, \Gamma_s E_{T, \pi - s} %
\Gamma_s^* \evec{g} \rangle \, S^{f, g}( \rd s ),
\end{align*}
where $x_+ := \max\{ x, 0 \}$ for all $x \in \R$ and
$\pi - s$ is the finite partition of $[ 0, \infty ]$ consisting of
intervals with end points
$\{ ( \pi_0 - s )_+ , \cdots, ( \pi_n - s )_+, \infty \}$; the
integral form of $E_{S \star T} \evec{f}$ now follows after refining
$\pi$. For the last identity, note that
\begin{align*}
\langle \evec{f}&, j_{S \star T}( E_{S \star T} \evec{g} \otimes %
\Gamma_{S \star T} y ) \rangle \\[1ex]
 & = \lim_\pi \sum_{j = 1}^{n + 1} \int_{[ 0, \pi_j ]} \langle %
\Gamma_s \Gamma_s^* \evec{f}, %
\Gamma_s T\bigl( [ ( \pi _{j - 1} - s)_+, \pi_j - s ] \bigr) %
\Gamma_s^* \evec{g} \rangle \langle \evec{f}, %
\Gamma_{\pi_j} y \rangle S^{f, g}( \rd s ) \\[1ex]
 & = \lim_\pi \int_{[ 0, \infty ]} \langle \evec{f}, \Gamma_s\Bigl( %
\smash[b]{\sum_{j = 1}^{n + 1}} 1_{[ 0, \pi_j ]}( s ) %
T\bigl( [ ( \pi _{j - 1} - s)_+, \pi_j - s ] \bigr) %
\evec{g( \cdot + s )|_{[ 0, \pi_j - s )}} \otimes %
\Gamma_{\pi_j - s} y \Bigr) \rangle \\
 & \hspace{18em} \times %
\smash[b]{\exp\Bigl( -\int_{\pi_j - s}^\infty %
\langle f( r + s ), g( r + s ) \rangle \intd r \Bigr)} %
S^{f, g}( \rd s ) \\[1ex] 
 & = \lim_\pi \int_{[ 0, \infty ]} \langle \evec{f}, \Gamma_s %
I_{T, \pi - s}( g( \cdot + s ), \Gamma y ) \rangle S^{f, g}( \rd s ) %
\\[1ex]
 & = \langle \evec{f}, \int_{[ 0, \infty ]} S( \rd s ) %
\evec{g|_{[ 0, s )}} \otimes \Gamma_s \int_{[ 0, \infty ]} %
T( \rd t ) \evec{g( \cdot + s )|_{[ 0, t )}} \otimes \Gamma_t y %
\rangle,
\end{align*}
as required, where $I_{T, \pi - s}$ is as in the proof of
Lemma~\ref{lem:key} and $(\Gamma y )( t ) := \Gamma_t y$ for all
$t \in [ 0, \infty ]$.
\end{proof}

\begin{remark}
The identity \eqref{eqn:SstarTint} may also be used to show that
\[
\Gamma_{S \star T} x = \int_{[ 0, \infty ]} S( \rd s ) %
\evec{0|_{[ 0, s )}} \otimes \Gamma_s E_T x = %
\Gamma_S \bigl( \Gamma_T x \bigr) \qquad \text{for all } x \in \fock
\]
and for any finite quantum stop times $S$ and $T$,
so that $\Gamma_{S \star T} = \Gamma_S \comp \Gamma_T$. As we shall
have no use for this result \cite[Proposition~7.6]{PaS87}, its proof
is left as an exercise.
\end{remark}

\begin{proposition}
Let $S$ and $T$ be finite quantum stop times and suppose $T$ is
deterministic, \ie $T\bigl( \{ t \} \bigr) = I$ for
some~$t \in ( 0, \infty )$. Then $S \star T = S + t$, where
\[
( S + t )\bigl( [ 0, u ] \bigr) = \left\{ \begin{array}{cl}
 0 & \text{if } u \in [ 0, t ), \\[1ex]
 S\bigl( [ 0, u - t ] \bigr) & \text{if } u \in [ t, \infty ].
\end{array}\right.
\]
\end{proposition}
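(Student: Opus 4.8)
The plan is to reduce the claim entirely to the integral representation \eqref{eqn:SstarTint} established in Theorem~\ref{thm:SstarT}. Since $T$ is deterministic with all of its mass at $t$, the family $v \mapsto T\bigl( [ 0, v ] \bigr)$ jumps from $0$ to $I$ at $v = t$: explicitly $T\bigl( [ 0, v ] \bigr) = 0$ for $v < t$ and $T\bigl( [ 0, v ] \bigr) = I$ for $v \ge t$. Substituting $v = u - s$ into \eqref{eqn:SstarTint} therefore turns the operator $T\bigl( [ 0, u - s ] \bigr)$ into either $0$ or $I$ according to whether $s > u - t$ or $s \le u - t$, so the whole argument comes down to tracking this single threshold.

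First I would dispose of the easy range. If $u < t$ then $u - s < t$ for every $s \in [ 0, u ]$, so $T\bigl( [ 0, u - s ] \bigr) = 0$ throughout and the integrand in \eqref{eqn:SstarTint} vanishes identically; hence $( S \star T )\bigl( [ 0, u ] \bigr) \evec{f} = 0$ for all $f \in \elltwo$, giving $( S \star T )\bigl( [ 0, u ] \bigr) = 0 = ( S + t )\bigl( [ 0, u ] \bigr)$.

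For $u > t$ the factor $T\bigl( [ 0, u - s ] \bigr)$ equals $I$ on $s \in [ 0, u - t ]$ and $0$ beyond, so \eqref{eqn:SstarTint} collapses to $\int_{[ 0, u - t ]} S( \rd s ) \evec{f|_{[ 0, s )}} \otimes \Gamma_s \Gamma_s^* \evec{f}$. The key observation is that this is precisely the integrand appearing in the identity $\evec{f} = \int_{[ 0, \infty ]} S( \rd s ) \evec{f|_{[ 0, s )}} \otimes \Gamma_s \Gamma_s^* \evec{f}$ used in the uniqueness half of Corollary~\ref{cor:key}; here the map $F : s \mapsto \Gamma_s \Gamma_s^* \evec{f} = \evec{0|_{[ 0, s )}} \otimes \evec{f|_{[ s, \infty )}}$ is bounded, continuous and future adapted, so the hypotheses of \eqref{eqn:keyS} are met. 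Applying the restriction identity \eqref{eqn:keyS} with $r = u - t$ to that full integral then yields $\int_{[ 0, u - t ]} S( \rd s ) \evec{f|_{[ 0, s )}} \otimes \Gamma_s \Gamma_s^* \evec{f} = S\bigl( [ 0, u - t ] \bigr) \evec{f}$, whence $( S \star T )\bigl( [ 0, u ] \bigr) = S\bigl( [ 0, u - t ] \bigr) = ( S + t )\bigl( [ 0, u ] \bigr)$.

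Since the two spectral measures then agree on every initial segment $[ 0, u ]$, and a quantum stop time is determined by the projections $S\bigl( [ 0, u ] \bigr)$ (see the remark following Definition~\ref{def:qst}), this proves $S \star T = S + t$. I expect the only genuine care to be needed at the threshold and at the degenerate endpoint: one must check that $s = u - t$ is correctly included, where $u - s = t$ and $T\bigl( [ 0, t ] \bigr) = I$, matching the closed interval $[ 0, u - t ]$; and the case $u = t$ must be treated separately, since there $r = u - t = 0$ falls outside the range of \eqref{eqn:keyS}, but both branches reduce to $S\bigl( [ 0, 0 ] \bigr) = S\bigl( \{ 0 \} \bigr) = 0$, which reconciles the two pieces of the definition of $S + t$.
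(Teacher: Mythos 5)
Your proof is correct and follows essentially the same route as the paper's: both reduce to the representation \eqref{eqn:SstarTint}, observe that $T\bigl( [ 0, u - s ] \bigr)$ is $0$ or $I$ according to the threshold $s = u - t$, and invoke \eqref{eqn:keyS} (via the identity $\evec{f} = \int_{[ 0, \infty ]} S( \rd s ) \evec{f|_{[ 0, s )}} \otimes \Gamma_s \Gamma_s^* \evec{f}$) to identify the collapsed integral with $S\bigl( [ 0, u - t ] \bigr) \evec{f}$. Your extra care at the endpoint $u = t$, where $S\bigl( \{ 0 \} \bigr) = 0$ reconciles the two branches, is a point the paper leaves implicit but changes nothing in substance.
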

\begin{proof}
If $u < t$ then $T\bigl( [ 0, u - s ] \bigr) = 0$ for all
$s \in [ 0, u ]$ and therefore
\[
( S \star T )\bigl( [ 0, u ] \bigr) \evec{f} = %
\int_{[ 0, u ]} S( \rd s ) \evec{f|_{[ 0, s ]}} \otimes %
\Gamma_s T\bigl( [ 0, u - s ] \bigr) \Gamma_s^* \evec{f} = 0 %
\quad \text{for all } f \in \elltwo.
\]
If $u \ge t$ then $T\bigl( [ 0, u - s ] \bigr)$ equals $I$ for all
$s \in [ 0, u - t ]$ and equals $0$ for all $s \in ( u - t, u ]$, so
\[
( S \star T )\bigl( [ 0, u ] \bigr) \evec{f} = %
\int_{[ 0, u - t ]} S( \rd s ) \evec{f|_{[ 0, s ]}} \otimes %
\Gamma_s \Gamma_s^* \evec{f} = S\bigl( [ 0, u - t ] \bigr) \evec{f} %
\quad \text{for all } f \in \elltwo,
\]
where the second equality holds by \eqref{eqn:keyS}.
\end{proof}

\begin{theorem}\label{thm:SstarTiso}%
{\textup{(\textit{Cf}.\ \cite[Theorem~7.5]{PaS87})}}
Let $S$ and $T$ be finite quantum stop times. Then
\[
j_{S, T} : \fock_{S)} \otimes \Gamma_S( \fock_{T)} ) \otimes %
\fock_{[S \star T} \to \fock; \ %
E_S x \otimes \Gamma_S E_T y \otimes \Gamma_{S \star T} z \mapsto %
j_S\bigl( %
E_S x \otimes \Gamma_S j_T( E_T y \otimes \Gamma_T z ) \bigr)
\]
is an isometric isomorphism such that
\begin{align}
j_{S, T}\bigl( \fock_{S)} \otimes \Gamma_S( \fock_{T)} ) \otimes %
\evec{0} \bigr) & = %
j_S\bigl( \fock_{S)} \otimes \Gamma_S( \fock_{T)} ) \bigr) = %
\fock_{S \star T)} \label{eqn:SstarT1} \\[1ex]
\text{and} \qquad %
j_{S, T}\bigl( \evec{0} \otimes \Gamma_S( \fock_{T)} ) \otimes %
\fock_{[S \star T} \bigr) & = \fock_{[S}. \label{eqn:SstarT2}
\end{align}
Furthermore,
\[
j_{S, T} = j_{S \star T} \comp ( j_S\bigr|_{F_{S)} \otimes %
\Gamma_S( \fock_{T)} )} \otimes I_{[S \star T} ).
\]
\end{theorem}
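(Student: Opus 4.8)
The plan is to assemble $j_{S,T}$ from the two isomorphisms $j_S$ and $j_T$ of Theorem~\ref{thm:isom}, verify directly that it is an isometric isomorphism, and then read off the range identities and the factorisation from a handful of facts about how $j_S$, $j_T$ and the shifts act on the vacuum and on the time projections. First I would check well-definedness and isometry. Recalling that $\Gamma_{S \star T} = \Gamma_S \comp \Gamma_T$, the legs $E_T y$ and $\Gamma_T z$ of the middle and outer factors are recovered as $\Gamma_S^*( \Gamma_S E_T y )$ and $\Gamma_S^*( \Gamma_{S \star T} z )$, so the defining formula is genuinely multilinear in the three arguments. Since $j_S$, $j_T$, $\Gamma_S$, $\Gamma_T$ and $\Gamma_{S \star T}$ are all isometric, collapsing one factor at a time gives
\[
\langle j_{S,T}( E_S x_1 \otimes \Gamma_S E_T y_1 \otimes \Gamma_{S \star T} z_1 ), j_{S,T}( E_S x_2 \otimes \Gamma_S E_T y_2 \otimes \Gamma_{S \star T} z_2 ) \rangle = \langle E_S x_1, E_S x_2 \rangle \langle E_T y_1, E_T y_2 \rangle \langle z_1, z_2 \rangle,
\]
which is exactly the inner product of the two domain vectors, so $j_{S,T}$ extends to an isometry. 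For surjectivity, as $y$ and $z$ vary the vectors $j_T( E_T y \otimes \Gamma_T z )$ exhaust $j_T( \fock_{T)} \otimes \fock_{[T} ) = \fock$, hence $\Gamma_S j_T( E_T y \otimes \Gamma_T z )$ exhausts $\Gamma_S( \fock ) = \fock_{[S}$ and the range of $j_{S,T}$ is $j_S( \fock_{S)} \otimes \fock_{[S} ) = \fock$.

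Next I would record two elementary identities: $j_T( E_T y \otimes \evec{0} ) = E_T y$ and $j_S( \evec{0} \otimes \Gamma_S w ) = \Gamma_S w$ for all $w \in \fock$. The first comes from feeding $\Gamma_T \evec{0} = \evec{0}$ into Theorem~\ref{thm:isom} and recognising the resulting integral as $E_T y$ via Theorem~\ref{thm:expS}; the second from feeding $f = 0$ into Theorem~\ref{thm:isom} and recognising the integral as $\Gamma_S w$ via Theorem~\ref{thm:shift}. These settle two of the range claims at once: $j_{S,T}( E_S x \otimes \Gamma_S E_T y \otimes \evec{0} ) = j_S( E_S x \otimes \Gamma_S E_T y )$ gives the first equality in~\eqref{eqn:SstarT1}, while $j_{S,T}( \evec{0} \otimes \Gamma_S E_T y \otimes \Gamma_{S \star T} z ) = \Gamma_S j_T( E_T y \otimes \Gamma_T z )$ ranges over $\Gamma_S( \fock ) = \fock_{[S}$, which is~\eqref{eqn:SstarT2}.

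The crux is the projection identity
\[
E_{S \star T} = j_S( I_{S)} \otimes \Gamma_S E_T \Gamma_S^* ) j_S^*,
\]
whose range statement is precisely the second equality $j_S( \fock_{S)} \otimes \Gamma_S( \fock_{T)} ) ) = \fock_{S \star T)}$ in~\eqref{eqn:SstarT1}. I would prove it on an exponential vector $\evec{f}$, using the reading of $j_S^* \evec{f}$ furnished by the proof of Theorem~\ref{thm:isom}, namely the refinement limit of $\sum_j ( E_{S, \pi_j} - E_{S, \pi_{j - 1}} ) \evec{f} \otimes \Gamma_S \evec{\theta_{\pi_j}^* f}$. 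Applying $I_{S)} \otimes \Gamma_S E_T \Gamma_S^*$ collapses $\Gamma_S^* \Gamma_S$ to the identity and turns the outer leg into $\Gamma_S E_T \evec{\theta_{\pi_j}^* f}$; as the mesh shrinks, $\theta_{\pi_j}^* f \to \theta_s^* f$ and the sums converge to $\int_{[ 0, \infty ]} S( \rd s ) \evec{f|_{[ 0, s )}} \otimes \Gamma_s E_T \Gamma_s^* \evec{f}$, which is $E_{S \star T} \evec{f}$ by Theorem~\ref{thm:SstarT}. With this in hand the composite $\Phi := j_{S,T} \comp ( j_S|_{\fock_{S)} \otimes \Gamma_S( \fock_{T)} )} \otimes I_{[S \star T} )^{-1}$ is a well-defined isometric isomorphism from $\fock_{S \star T)} \otimes \fock_{[S \star T}$ onto $\fock$, and testing it on $E_{S \star T} \evec{h} \otimes \Gamma_{S \star T} z$ by the same refinement argument produces the iterated integral on the right-hand side of~\eqref{eqn:jSstarT}; uniqueness in Theorem~\ref{thm:isom} for $S \star T$ then forces $\Phi = j_{S \star T}$, which rearranges to the asserted factorisation.

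The main obstacle is this projection identity, equivalently the inclusion $\fock_{S \star T)} \subseteq j_S( \fock_{S)} \otimes \Gamma_S( \fock_{T)} ) )$: the integrand $\Gamma_s E_T \Gamma_s^* \evec{f}$ defining $E_{S \star T}$ carries an $s$-dependent post-leg, whereas $j_S$ directly produces stop-time integrals only with a post-vector $\Gamma_s w$ that is fixed in $s$. Reconciling the two is what the partition-refinement limit achieves, relying on the strong continuity of $s \mapsto \theta_s^* f$ and $s \mapsto \Gamma_s$; the very same device, fed through~\eqref{eqn:jSstarT}, is what delivers the factorisation, so essentially all the analytic work is concentrated in this one limiting computation.
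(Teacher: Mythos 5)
Your proposal is correct and shares the paper's overall architecture --- everything is reduced to Theorem~\ref{thm:isom} and Theorem~\ref{thm:SstarT} --- but your execution of the two non-trivial steps is genuinely different. For the isometric-isomorphism claim, you collapse the three tensor legs one at a time and argue surjectivity by exhaustion, whereas the paper simply factors
\[
j_{S, T} = j_S \comp ( I_{S)} \otimes \Gamma_S ) \comp
( I_{S)} \otimes j_T ) \comp
( I_{S)} \otimes \Gamma_S^* \otimes \Gamma_T \Gamma_{S \star T}^* )
\]
as a composition of isometric isomorphisms. For the crux --- your projection identity $E_{S \star T} = j_S( I_{S)} \otimes \Gamma_S E_T \Gamma_S^* ) j_S^*$, equivalently the second equality in \eqref{eqn:SstarT1} --- and for the factorisation, you argue \emph{strongly}, re-running the partition-refinement limit from the proof of Theorem~\ref{thm:isom}, first on $j_S^* \evec{f}$ and then on $\Phi( E_{S \star T} \evec{h} \otimes \Gamma_{S \star T} z )$. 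The paper instead argues \emph{weakly}: it pairs $E_{S \star T} \evec{f}$, in its integral form, against $j_S( E_S \evec{g} \otimes \Gamma_S y )$ via \eqref{eqn:keyip} to obtain $E_{S \star T} \, j_S( E_S x \otimes \Gamma_S y ) = j_S( E_S x \otimes \Gamma_S E_T y )$, and pairs $j_{S, T}( E_S \evec{f} \otimes \Gamma_S E_T \evec{g} \otimes \Gamma_{S \star T} y )$ against $j_{S \star T}( E_{S \star T} \evec{h} \otimes \Gamma_{S \star T} z )$ via \eqref{eqn:jSstarT} and \eqref{eqn:keyip}. The weak route buys economy: all the hard convergence work is already encapsulated in \eqref{eqn:keyip} and Theorem~\ref{thm:SstarT}, so no fresh uniform-continuity estimates are needed. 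Your strong route costs a repetition (or at least re-citation) of the $\eta$-type estimates from Theorem~\ref{thm:isom}'s proof, both for the projection identity and for identifying $\Phi$ with $j_{S \star T}$, but it buys an explicit, quotable operator identity for $E_{S \star T}$ and makes visible exactly where the analytic content lies; both computations are sound.

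One point needs repair. You ``recall'' that $\Gamma_{S \star T} = \Gamma_S \comp \Gamma_T$, but within this paper that identity is only stated in a remark, explicitly not used, and its proof is left as an exercise (it is \cite[Proposition~7.6]{PaS87}); as written, your argument therefore rests on a fact not established here. Fortunately you invoke it only to recover the outer leg $\Gamma_T z$ from $\Gamma_{S \star T} z$ in the well-definedness discussion, where it is dispensable: $z$ itself is recovered as $\Gamma_{S \star T}^*( \Gamma_{S \star T} z )$, so $\Gamma_T z = \Gamma_T \Gamma_{S \star T}^*( \Gamma_{S \star T} z )$ --- this operator $\Gamma_T \Gamma_{S \star T}^*$ is precisely the last factor in the paper's composition above --- and, in any case, your inner-product computation already yields well-definedness of the linear extension by the standard polarisation argument. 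Either drop the appeal or replace it as indicated.
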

\begin{proof}
Note first that
\[
j_{S, T} = j_S \comp ( I_{S)} \otimes \Gamma_S ) \comp %
( I_{S)} \otimes j_T ) \comp %
( I_{S)} \otimes \Gamma_S^* \otimes \Gamma_T \Gamma_{S \star T}^* );
\]
as each of the maps on the right-hand side is an isometric isomorphism
between the appropriate spaces, so is $j_{S, T}$. Next, let
$f$, $g \in \elltwo$ and $y \in \fock$; it follows from
Theorem~\ref{thm:SstarT} that
\begin{align*}
\langle E_{S \star T} \evec{f}, %
j_S( E_S \evec{g} \otimes \Gamma_S y ) \rangle & = %
\int_{[ 0, \infty ]} \langle \Gamma_s E_T \Gamma_s^* \evec{f}, %
\Gamma_s y \rangle S^{f, g}( \rd s ) \\[1ex]
 & = \int_{[ 0, \infty ]} \langle \Gamma_s \Gamma_s^* \evec{f}, %
\Gamma_s E_T y \rangle S^{f, g}( \rd s ) \\[1ex]
 & = \langle \evec{f}, %
j_S( E_S \evec{g} \otimes \Gamma_S E_T y ) \rangle.
\end{align*}
Thus
\[
E_{S \star T} j_S( E_S x \otimes \Gamma_S y ) = %
j_S( E_S x \otimes \Gamma_S E_T y ) = %
j_{S, T}\bigl( E_S x \otimes \Gamma_S E_T y \otimes \evec{0} \bigr) %
\quad \text{for all } x, y \in \fock;
\]
as $j_S$ is surjective, \eqref{eqn:SstarT1} follows. That
\eqref{eqn:SstarT2} holds is an immediate consequence of the identity
\[
j_{S, T}%
( \evec{0} \otimes \Gamma_S E_T y \otimes \Gamma_{S \star T} z ) = %
\Gamma_S j_T( E_T y \otimes \Gamma_T z ) %
\qquad \text{for all } y, z \in \fock
\]
and the surjectivity of $j_T$.

Finally, if $f$, $g$, $h \in \elltwo$ and $y$, $z \in \fock$ then
\begin{align*}
j_{S, T}( E_S \evec{f} \otimes \Gamma_S E_T \evec{g} \otimes %
\Gamma_{S \star T} y ) & = j_S\bigl( %
E_S \evec{f} \otimes \Gamma_S %
j_T( E_T \evec{g} \otimes \Gamma_T y ) \bigr) \\[1ex]
 & = \int_{[ 0, \infty ]} S( \rd s ) \evec{f|_{[ 0, s )}} \otimes %
\Gamma_s \int_{[ 0 , \infty ]} T( \rd t ) \evec{g|_{[ 0, t )}} %
\otimes \Gamma_t y,
\end{align*}
so Theorem~\ref{thm:SstarT} implies that
\begin{align*}
\langle j_{S, T}( E_S \evec{f&} \otimes \Gamma_S E_T \evec{g} \otimes %
\Gamma_{S \star T} y ), j_{S \star T}( E_{S \star T} \evec{h} %
\otimes \Gamma_{S \star T} z ) \rangle \\[1ex]
 & = \int_{[ 0, \infty ]} \bigl\langle \int_{[ 0, \infty ]} %
T( \rd t ) \evec{g|_{[ 0, t )}} \otimes \Gamma_t y, %
\int_{[ 0, \infty ]} T( \rd t ) \evec{h( \cdot + s )|_{[ 0, t )}} %
\otimes \Gamma_t z \bigr\rangle \, S^{f, h}( \rd s ) \\[1ex]
 & = \int_{[ 0, \infty ]} \int_{[ 0, \infty ]} \langle y, z \rangle %
\, T^{g, h( \cdot + s )}( \rd t ) \, S^{f, h}( \rd s ) \\[1ex]
 & = \int_{[ 0, \infty ]} \langle E_T \evec{g}, E_T \Gamma_s^* %
\evec{h} \rangle \, S^{f, h}( \rd s ) \langle y, z \rangle \\[1ex]
 & = \langle j_S( E_S \evec{f} \otimes \Gamma_S E_T \evec{g} ), %
\evec{h} \rangle \langle y, z \rangle \\[1ex]
 & = \langle j_{S \star T}( j_S\bigl( E_S \evec{f} \otimes %
\Gamma_S E_T \evec{g} \bigr) \otimes \Gamma_{S \star T} y ), %
j_{S \star T}( E_{S \star T} \evec{h} \otimes \Gamma_{S \star T} z ) %
\rangle.
\qedhere
\end{align*}
\end{proof}

\section{The CCR flow}

\begin{definition}\label{def:ccrflow}
For all $t \in \R_+$, define a unital $*$-homomorphism
\[
\sigma_t : \bop{\fock}{} \to \bop{\fock}{}; \ %
X \mapsto I_{t)} \otimes \Gamma_t X \Gamma_t^*,
\]
where $\fock_{[t}$ is regarded as an isometric isomorphism from
$\fock$ onto~$\fock_{[t}$ with inverse $\Gamma_t^*$. Note
that~$\sigma_t$ has range
$\im \sigma_t = I_{t)} \otimes \bop{\fock_{[t}}{}$, that
$\sigma_s( E_t ) = E_{s + t}$ for all $s \in \R_+$ and that
\begin{equation}\label{eqn:shift}
\sigma_t( X ) \Gamma_t = \Gamma_t X %
\qquad \text{for all } X \in \bop{\fock}{}.
\end{equation}
Let $\sigma_{\infty}$ be the constant map on $\bop{\fock}{}$ with
value $I$.

As is well known, $( \sigma_t )_{t \in \R_+}$ is a semigroup on
$\bop{\fock}{}$ called the \emph{CCR flow}.
\end{definition}

\begin{theorem}\label{thm:flowstop}
Let $S$ be a finite quantum stop time and let
$\pi = \{ 0 = \pi_0 < \cdots < \pi_{n + 1} = \infty \}$ be a finite
partition of $[ 0, \infty ]$. The map
\[
\sigma_{S, \pi} : \bop{\fock}{} \to \bop{\fock}{}; \ %
X \mapsto \sum_{j = 1}^{n + 1} %
\sigma_{\pi_j}( X ) S\bigl( ( \pi_{j - 1}, \pi_j ] \bigr)
\]
is a unital $*$-homomorphism, the limit
$\sigma_S( X ) := \stlim_\pi \sigma_{S, \pi}( X )$
exists for all $X \in \bop{\fock}{}$ and the map
\[
\sigma_S : \bop{\fock}{} \to \bop{\fock}{}; \ %
X \mapsto \sigma_S( X )
\]
is a unital $*$-homomorphism such that
\begin{equation}\label{eqn:flowgamma}
\sigma_S( X ) \Gamma_S = \Gamma_S X %
\qquad \text{for all } X \in \bop{\fock}{}.
\end{equation}
\end{theorem}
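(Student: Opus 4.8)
The plan is to handle the finite-partition maps $\sigma_{S,\pi}$ first, by pure algebra, and then to transport everything to the strong limit. Throughout I write $P_k := S\bigl( ( \pi_{k-1}, \pi_k ] \bigr)$, so that $( P_k )_{k=1}^{n+1}$ is a family of mutually orthogonal projections with $\sum_k P_k = I$ (using $S( \{ 0 \} ) = 0$). The one fact driving the whole computation is that $P_k$ commutes with $\sigma_{\pi_k}( Y )$ for every $Y$: identity adaptedness (condition (iv)) places $P_k = S( [ 0, \pi_k ] ) - S( [ 0, \pi_{k-1} ] )$ in $\bop{\fock_{\pi_k)}}{} \otimes I_{[\pi_k}$, whereas $\sigma_{\pi_k}( Y ) = I_{\pi_k)} \otimes \Gamma_{\pi_k} Y \Gamma_{\pi_k}^*$ lies in $I_{\pi_k)} \otimes \bop{\fock_{[\pi_k}}{}$, and operators supported on complementary tensor factors commute. (At the endpoint $\pi_{n+1} = \infty$ there is nothing to check, since $\sigma_\infty \equiv I$.)

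Granting this, that $\sigma_{S,\pi}$ is a unital $*$-homomorphism is short. Unitality is immediate from $\sum_k P_k = I$. For multiplicativity I would expand
\[
\sigma_{S,\pi}( X ) \sigma_{S,\pi}( Y ) =
\sum_{j,k} \sigma_{\pi_j}( X )\, P_j\, \sigma_{\pi_k}( Y )\, P_k,
\]
commute each $P_k$ through $\sigma_{\pi_k}( Y )$, and use $P_j P_k = \delta_{jk} P_j$ together with the same commutation once more to collapse the double sum to $\sum_j \sigma_{\pi_j}( X ) \sigma_{\pi_j}( Y ) P_j = \sum_j \sigma_{\pi_j}( XY ) P_j = \sigma_{S,\pi}( XY )$, since each $\sigma_{\pi_j}$ is a homomorphism; the $*$-property follows identically by commuting $P_j$ past $\sigma_{\pi_j}( X^* )$. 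The same commutation, together with $\Gamma_{S,\pi} = \sum_k P_k \Gamma_{\pi_k}$ and the shift identity~\eqref{eqn:shift}, gives the finite-partition intertwining relation $\sigma_{S,\pi}( X ) \Gamma_{S,\pi} = \Gamma_{S,\pi} X$, which I carry to the limit at the end.

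The substantial step is the existence of the strong limit, and I expect this to be the main obstacle. Since each $\sigma_{S,\pi}$ is a unital $*$-homomorphism of a $C^*$-algebra it is contractive, so $\| \sigma_{S,\pi}( X ) \| \le \| X \|$ uniformly in $\pi$; by this uniform bound and the density of $\evecs$ it suffices to prove that $\sigma_{S,\pi}( X ) \evec{f}$ converges for each $f \in \elltwo$. Here I would recognise the sum as a stop-time Riemann sum: commuting $P_j$ through $\sigma_{\pi_j}( X )$ and using $\Gamma_{\pi_j}^* \evec{f|_{[\pi_j,\infty)}} = \evec{f( \cdot + \pi_j )}$ gives
\[
\sigma_{S,\pi}( X ) \evec{f} =
\sum_{j=1}^{n+1} S\bigl( ( \pi_{j-1}, \pi_j ] \bigr)\,
\evec{f|_{[0,\pi_j)}} \otimes
\Gamma_{\pi_j}\bigl( X \evec{f( \cdot + \pi_j )} \bigr),
\]
which is exactly the sum $I_{S,\pi}( f, F )$ of Lemma~\ref{lem:key} for the future-adapted $F : s \mapsto \Gamma_s\bigl( X \evec{f( \cdot + s )} \bigr)$, whose range lies in $\evec{0|_{[ 0, s )}} \otimes \fock_{[s}$ because $\Gamma_s$ maps into $\fock_{[s}$. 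This $F$ is bounded, with $\| F( s ) \| = \| X \evec{f( \cdot + s )} \| \le \| X \| \exp\bigl( \hlf \| f \|^2 \bigr)$, and continuous on $\R_+$ by strong continuity of $( \Gamma_s )$ and of $s \mapsto \evec{f( \cdot + s )}$. Lemma~\ref{lem:key} then yields convergence of the sums to $\int_{[ 0, \infty ]} S( \rd s )\, \evec{f|_{[0,s)}} \otimes F_s$, the finiteness of $S$ entering through the boundary term $\| S\bigl( ( \pi_n, \infty ) \bigr) \evec{f} \| \to 0$; identifying the algebraic sum with a genuine Riemann sum and controlling the endpoint $\infty$ is the delicate point.

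Finally I would transport the structure to $\sigma_S( X ) := \stlim_\pi \sigma_{S,\pi}( X )$. Unitality passes trivially. For the $*$-property, $\sigma_{S,\pi}( X )^* = \sigma_{S,\pi}( X^* )$ converges strongly to $\sigma_S( X^* )$, while strong (hence weak) convergence of $\sigma_{S,\pi}( X )$ and weak continuity of the adjoint give $\sigma_{S,\pi}( X )^* \to \sigma_S( X )^*$ weakly, so uniqueness of weak limits forces $\sigma_S( X^* ) = \sigma_S( X )^*$. Multiplicativity and the intertwining relation~\eqref{eqn:flowgamma} both rest on the elementary fact that if $A_\pi \to A$ and $B_\pi \to B$ strongly with $\sup_\pi \| A_\pi \| < \infty$ then $A_\pi B_\pi \to AB$ strongly: applied to $\sigma_{S,\pi}( XY ) = \sigma_{S,\pi}( X ) \sigma_{S,\pi}( Y )$ it yields $\sigma_S( XY ) = \sigma_S( X ) \sigma_S( Y )$, and applied to $\sigma_{S,\pi}( X ) \Gamma_{S,\pi} = \Gamma_{S,\pi} X$, using $\Gamma_{S,\pi} \to \Gamma_S$ strongly from Theorem~\ref{thm:shift}, it yields $\sigma_S( X ) \Gamma_S = \Gamma_S X$.
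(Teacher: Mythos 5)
Your proof is correct, and the one substantial step --- existence of the strong limit --- is handled by a genuinely different route from the paper's. The paper proves convergence of $\sigma_{S, \pi}( X ) \evec{f}$ by a bespoke refinement estimate: it bounds $\| ( \sigma_{S, \pi'} - \sigma_{S, \pi} )( X ) \evec{f} \|$ directly, using \eqref{eqn:shift} to rewrite increments in terms of $\| ( \sigma_{t - s}( X ) - X ) \evecc\bigl( f( \cdot + s ) \bigr) \|$, restricts first to $f$ of compact support so that the resulting supremum can be controlled via the strong continuity of $t \mapsto \sigma_t( X )$, and only then extends by the uniform bound $\| \sigma_{S, \pi}( X ) \| \le \| X \|$ and density. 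You instead recognise $\sigma_{S, \pi}( X ) \evec{f}$ as the stop-time Riemann sum $I_{S, \pi}( f, F )$ of Lemma~\ref{lem:key} with $F( s ) = \Gamma_s\bigl( X \evec{f( \cdot + s )} \bigr)$, which is bounded, continuous on $[ 0, \infty )$ and future adapted, so that the lemma does all the analytic work; this is exactly the device the paper itself uses to construct $\Gamma_S$ in Theorem~\ref{thm:shift}, so your argument unifies the construction of $\sigma_S$ with those of $E_S$ and $\Gamma_S$, avoids the compact-support reduction entirely (strong continuity of the shifts replacing strong continuity of the flow), and delegates the endpoint at infinity to the lemma's convention $x \otimes F_\infty := x$ together with the finiteness of $S$. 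The algebraic part (same-index commutation of $S\bigl( ( \pi_{k - 1}, \pi_k ] \bigr)$ with $\sigma_{\pi_k}( Y )$, coming from identity adaptedness) and the transport of unitality, the $*$-property, multiplicativity and \eqref{eqn:flowgamma} to the limit agree in substance with the paper, which compresses them into a sentence or two; your spelled-out net arguments (uniform boundedness plus joint strong continuity of composition on bounded sets) are exactly what is needed.

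One caveat, which applies verbatim to the paper's own proof as well: the finite-partition identity $\sigma_{S, \pi}( X ) \Gamma_{S, \pi} = \Gamma_{S, \pi} X$ is not literally true, because \eqref{eqn:shift} fails at $t = \infty$: there $\sigma_\infty( X ) \Gamma_\infty = E_0$ while $\Gamma_\infty X = E_0 X$. The two sides of your identity therefore differ by the endpoint term $S\bigl( ( \pi_n, \infty ] \bigr) E_0 ( I - X )$. This is harmless --- the discrepancy tends to $0$ strongly under refinement, since $S\bigl( ( \pi_n, \infty ] \bigr) \to S\bigl( \{ \infty \} \bigr) = 0$ by finiteness of $S$ --- so \eqref{eqn:flowgamma} survives in the limit, but your write-up (like the paper's) should discard this endpoint term explicitly rather than assert the exact finite-partition identity.
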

\begin{proof}
Since
\[
\sigma_s( X ) \in I_{s)} \otimes \bop{\fock_{[s}}{} %
\qquad \text{and} \qquad %
S\bigl( ( r, s ] \bigr) \in \bop{\fock_{s)}}{} \otimes I_{[s}
\]
for all $r$, $s \in \R_+$ with $r < s$, the first claim is immediate.
For the next, suppose $n \ge 1$, let~$\pi'$ be a refinement of $\pi$
and, for $j = 0$, \ldots, $n$, let $k_j \ge j$ be such that
$\pi'_{k_j} = \pi_j$ and let~$l_j \ge 1$ be such
that~$\pi'_{k_j + l_j} = \pi_{j + 1}$. Then
\begin{align*}
\bigl( \sigma_{S, \pi'} - \sigma_{S, \pi} \bigr)( X ) \evec{f} & = %
 \sum_{j = 0}^{n - 1} \sum_{l = 1}^{l_j} %
S\bigl( ( \pi'_{k_j + l - 1}, \pi'_{k_j + l} ] \bigr) %
( \sigma_{\pi'_{k_j + l}} - \sigma_{\pi'_{k_j + l_j}} )( X ) %
\evec{f} \\
 & \quad + \sum_{l = 1}^{l_n - 1} %
S\bigl( ( \pi'_{k_n + l - 1}, \pi'_{k_n + l} ] \bigr) %
\sigma_{\pi'_{k_n + l}}( X ) \evec{f} - %
S\bigl( ( \pi_n, \pi'_{k_n + l_n - 1} ] \bigr) \evec{f}.
\end{align*}
If $r$, $s$, $t \in \R_+$ are such that $r < s \le t$ then
\begin{align*}
\| S\bigl( ( r, s ] \bigr) %
\bigl( \sigma_t( X ) - \sigma_s( X ) \bigr) \evec{f} \| & = %
\| S\bigl( ( r, s ] \bigr) \evec{1_{[ 0 , s )} f} \| \, %
\| \bigl( \sigma_t( X ) - \sigma_s( X ) \bigr) %
\evec{1_{[ s, \infty )} f} \| \\[1ex]
 & \le \| S\bigl( ( r, s ] \bigr) \evec{f} \| \, %
\| \sigma_s( \sigma_{t - s}( X ) - X ) \Gamma_s %
\Gamma_s^* \evec{f} \| \\[1ex]
 & = \| S\bigl( ( r, s ] \bigr) \evec{f} \| \, %
\| ( \sigma_{t - s}( X ) - X ) %
\evecc\bigl( f( \cdot + s ) \bigr) \|,
\end{align*}
by \eqref{eqn:shift}; furthermore,
\[
\| S\bigl( ( r, s ] \bigr) \sigma_s( X ) \evec{f} \| \le %
\| S\bigl( ( r, s ] \bigr) \evec{f} \| \, \| X \|.
\]
Hence if $f$ has support contained in
$[ 0, T ] \subseteq [ 0, \pi_m ]$ and
$\delta_\pi := \max\{ \pi_j - \pi_{j - 1} : j = 1, \ldots, m \}$ then
\begin{align*}
\| \bigl( \sigma_{S, \pi'} - \sigma_{S, \pi} \bigr)( X ) & \evec{f} \|
\\[1ex]
 & \le \| S\bigl( [ 0, \pi_n ] \bigr) \evec{f} \| %
\sup\{ \| ( \sigma_r( X ) - X ) %
\evecc\bigl( f( \cdot + s ) \bigr) \| : %
r \in [ 0, \delta_\pi ], \ s \in [ 0, T ] \} \\[1ex]
 & \qquad + %
\| S\bigl( ( \pi_n, \infty ) \bigr) \evec{f} \| ( \| X \| + 1 );
\end{align*}
since $t \mapsto \sigma_t( X )$ is strongly continuous on $\fock$ for
all $X \in \bop{\fock}{}$, this estimate gives the
existence of $\sigma_S( X ) \evec{f}$: note that
$s \mapsto \evecc\bigl( f( \cdot + s ) \bigr)$ is uniformly continuous
on $[ 0, T ]$ and
\begin{multline*}
\sup\{ \| ( \sigma_r( X ) - X ) %
\evecc\bigl( f( \cdot + s ) \bigr) \| : %
r \in [ 0, \delta ], \ s \in [ 0, T ] \} \\[1ex]
\le \sup\{ \| ( \sigma_r( X ) - X ) %
\evecc\bigl( f( \cdot + s_j ) \bigr) \| : %
r \in [ 0, \delta ], \ j = 0, \ldots, N \} \quad \\
+ 2 \| X \| \sup\{ \| \evecc\bigl( f( \cdot + s ) \bigr) - %
\evecc\bigl( f( \cdot + s_j ) \bigr) \| : %
s \in [ s_j, s_{j + 1} ], \ j = 0, \ldots, N \},
\end{multline*}
where $\{ 0 = s_0 < \cdots < s_{N + 1} = T \}$ is any partition of
$[ 0, T ]$. Thus $\sigma_S( X ) x := \lim_\pi \sigma_{S, \pi}( X ) x$
exists if $x$ is any finite linear combination of exponential vectors
corresponding to functions with compact support; since
$\| \sigma_{S, \pi}( X ) \| \le \| X \|$ for all $\pi$, it follows
that $x \mapsto \sigma_S( X ) x$ extends to a bounded operator
$\sigma_S( X )$ on the whole of $\fock$; a simple approximation
argument now shows that~$\sigma_S( X )$ is the limit of
$\sigma_{S, \pi}( X )$ in the strong operator topology.

The map $\sigma_S$ is $*$-homomorphic and unital because
each~$\sigma_{S, \pi}$ is. For the final identity, observe that
\[
\sigma_{S, \pi}( X ) %
\sum_{k = 1}^{n + 1} S\bigl( ( \pi_{k - 1}, \pi_k ] \bigr) %
\Gamma_{\pi_k} = %
\sum_{j = 1}^{n + 1} S\bigl( ( \pi_{j - 1}, \pi_j ] \bigr) %
\sigma_{\pi_j}( X ) \Gamma_{\pi_j} = %
\sum_{j = 1}^{n + 1} S\bigl( ( \pi_{j - 1}, \pi_j ] \bigr) %
\Gamma_{\pi_j} X,
\]
by \eqref{eqn:shift}, so refining $\pi$ gives the claim.
\end{proof}

\begin{proposition}\label{prp:sigmagamma}
Let $S$ be a finite quantum stop time. Then
\[
\sigma_S( X ) = j_S( I_{S)} \otimes \Gamma_S X \Gamma_S^* ) j_S^* %
\qquad \text{for all } X \in \bop{\fock}{},
\]
where $j_S : \fock_{S)} \otimes \fock_{[S} \to \fock$ is the isometric
isomorphism of Theorem~\ref{thm:isom}.
\end{proposition}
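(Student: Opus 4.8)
The plan is to verify the identity by evaluating both sides on the total family of vectors $j_S( E_S \evec{h} \otimes \Gamma_S \evec{k} )$, $h$, $k \in \elltwo$. These span a dense subspace of $\fock$: the sets $\{ E_S \evec{h} \}$ and $\{ \Gamma_S \evec{k} \}$ are total in $\fock_{S)} = \im E_S$ and $\fock_{[S} = \im \Gamma_S$ respectively (because the exponential vectors are total in $\fock$, $E_S$ is the orthogonal projection onto $\fock_{S)}$ and $\Gamma_S$ is an isometry onto $\fock_{[S}$), and $j_S$ is onto. As both $\sigma_S( X )$ and $j_S( I_{S)} \otimes \Gamma_S X \Gamma_S^* ) j_S^*$ are bounded, agreement on this family will yield the result.

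The right-hand side is immediate. Since $j_S$ is an isometric isomorphism and $\Gamma_S$ is an isometry, $j_S^* j_S = I$ and $\Gamma_S^* \Gamma_S = I$, whence
\[
j_S( I_{S)} \otimes \Gamma_S X \Gamma_S^* ) j_S^* \,
j_S( E_S \evec{h} \otimes \Gamma_S \evec{k} ) =
j_S( E_S \evec{h} \otimes \Gamma_S X \evec{k} ) =
\int_{[ 0, \infty ]} S( \rd s ) \evec{h|_{[ 0, s )}} \otimes
\Gamma_s X \evec{k},
\]
the last equality holding by Theorem~\ref{thm:isom} applied with $x = X \evec{k}$.

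For the left-hand side I would work with the Riemann sums $I_{S, \pi}$ introduced in Lemma~\ref{lem:key}. Writing $v := j_S( E_S \evec{h} \otimes \Gamma_S \evec{k} ) = \lim_\pi I_{S, \pi}( h, \Gamma \evec{k} )$, where $( \Gamma \evec{k} )( s ) := \Gamma_s \evec{k}$, the crucial computation is
\[
\sigma_{S, \pi}( X ) I_{S, \pi}( h, \Gamma \evec{k} ) =
I_{S, \pi}( h, \Gamma X \evec{k} ), \qquad
( \Gamma X \evec{k} )( s ) := \Gamma_s X \evec{k}.
\]
This follows from the definition of $\sigma_{S, \pi}$ once one uses the orthogonality $S( ( \pi_{i - 1}, \pi_i ] ) S( ( \pi_{j - 1}, \pi_j ] ) = \delta_{i j} S( ( \pi_{j - 1}, \pi_j ] )$ of the spectral projections, the adaptedness $S( ( \pi_{j - 1}, \pi_j ] ) \in \bop{\fock_{\pi_j)}}{} \otimes I_{[\pi_j}$, the factorisation $\sigma_{\pi_j}( X ) = I_{\pi_j)} \otimes \Gamma_{\pi_j} X \Gamma_{\pi_j}^*$, and $\Gamma_{\pi_j}^* \Gamma_{\pi_j} = I$; the factors $S( ( \pi_{j - 1}, \pi_j ] )$ and $\sigma_{\pi_j}( X )$ act on the two legs of the decomposition $\fock \cong \fock_{\pi_j)} \otimes \fock_{[\pi_j}$. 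By Lemma~\ref{lem:key} and Theorem~\ref{thm:isom}, the right-hand side converges, as $\pi$ is refined, to $\int_{[ 0, \infty ]} S( \rd s ) \evec{h|_{[ 0, s )}} \otimes \Gamma_s X \evec{k} = j_S( E_S \evec{h} \otimes \Gamma_S X \evec{k} )$, which is exactly the right-hand side found above.

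It remains to combine the two limits, and this is the one delicate point. Because $\| \sigma_{S, \pi}( X ) \| \le \| X \|$ uniformly in $\pi$ and $\sigma_{S, \pi}( X ) \to \sigma_S( X )$ in the strong operator topology, the estimate
\[
\| \sigma_{S, \pi}( X ) I_{S, \pi}( h, \Gamma \evec{k} ) -
\sigma_S( X ) v \| \le
\| X \| \, \| I_{S, \pi}( h, \Gamma \evec{k} ) - v \| +
\| ( \sigma_{S, \pi}( X ) - \sigma_S( X ) ) v \|
\]
shows that $\sigma_{S, \pi}( X ) I_{S, \pi}( h, \Gamma \evec{k} ) \to \sigma_S( X ) v$; comparison with the previous paragraph then gives $\sigma_S( X ) v = j_S( I_{S)} \otimes \Gamma_S X \Gamma_S^* ) j_S^* v$, and boundedness together with density of the chosen family completes the argument. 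The main obstacle is exactly this passage to the limit: one must let the partition refine simultaneously in the approximation $I_{S, \pi}( h, \Gamma \evec{k} ) \to v$ of the vector and in the strong limit defining $\sigma_S( X )$, and it is precisely here that the uniform bound $\| \sigma_{S, \pi}( X ) \| \le \| X \|$ is indispensable.
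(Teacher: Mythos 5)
Correct, and essentially the paper's own argument: both you and the paper verify the identity on the total family $j_S( E_{S, t} \evec{f} \otimes \Gamma_S \evec{g} )$ by exploiting that $\sigma_{\pi_j}( X )$ and $S\bigl( ( \pi_{j - 1}, \pi_j ] \bigr)$ act on complementary legs of $\fock \cong \fock_{\pi_j)} \otimes \fock_{[\pi_j}$, so that $\sigma_{S, \pi}( X )$ carries the Riemann sums for $j_S( E_S \evec{h} \otimes \Gamma_S \evec{k} )$ to those for $j_S( E_S \evec{h} \otimes \Gamma_S X \evec{k} )$, and then pass to the joint refinement limit using the uniform bound $\| \sigma_{S, \pi}( X ) \| \le \| X \|$ together with the strong convergence from Theorem~\ref{thm:flowstop}. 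The only difference is presentational: the paper carries out the computation at the level of matrix elements (pairing against a second vector of the same form, with $t$ finite), whereas you work directly at the vector level with $t = \infty$.
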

\begin{proof}
Let $\pi = \{ 0 = \pi_0 < \cdots < \pi_{n + 1} = \infty \}$ be a
finite partition containing $t = \pi_m \in ( 0, \infty )$ and let $f$,
$f'$, $g$, $g' \in \elltwo$. If $r$, $s \in \R_+$ are such that
$r < s$ then
\[
\langle \evec{1_{[ 0, s )} f + \theta_s g}, %
S\bigl( ( r, s ] \bigr) \sigma_s( X ) %
\evec{1_{[ 0, s )} f' + \theta_s g'} \rangle = %
\langle \evec{1_{[ 0, s )} f}, %
S\bigl( ( r, s ] \bigr) \evec{1_{[ 0, s )} f'} \rangle %
\langle \evec{g}, X \evec{g'} \rangle.
\]
With the notation used in the proofs of Lemma~\ref{lem:key} and
Theorems~\ref{thm:expS} and~\ref{thm:shift}, it follows that
\[
\langle I_{S, \pi \cap [ 0, t ]}\bigl( f, \Gamma \evec{g} \bigr), %
\sigma_{S, \pi}( X ) %
I_{S, \pi \cap [ 0, t ]}\bigl( f', \Gamma\evec{g'} \bigr) %
\rangle = \langle E_{S, \pi \cap [ 0, t ]} \evec{f}, %
E_{S, \pi \cap [ 0, t ]} \evec{f'} \rangle %
\langle \evec{g}, X \evec{g'} \rangle
\]
and therefore, by refining $\pi$,
\begin{multline*}
\langle %
j_S\bigl( E_{S, t} \evec{f} \otimes \Gamma_S \evec{g} \bigr), %
\sigma_S( X ) j_S\bigl( E_{S, t} \evec{f'} \otimes %
\Gamma_S \evec{g'} \bigr) \rangle \\
 = \langle E_{S, t} \evec{f} \otimes \Gamma_S \evec{g}, %
( I_{S)} \otimes \Gamma_S X \Gamma_S^* ) %
\bigl( E_{S, t} \evec{f'} \otimes \Gamma_S \evec{g'} \bigr) \rangle,
\end{multline*}
as required.
\end{proof}

\begin{theorem}\label{thm:addt}
The map $S \mapsto \sigma_S$ is a homomorphism from the convolution
semigroup of finite quantum stop times to the semigroup of unital
endomorphisms of $\bop{\fock}{}$ : for any finite quantum stop times
$S$ and $T$,
\[
\sigma_{S \star T} = \sigma_S \comp \sigma_T
\]
and, furthermore, $\sigma_S( E_T ) = E_{S \star T}$.
\end{theorem}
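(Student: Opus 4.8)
The plan is to realise each endomorphism as conjugation of a ``corner'' operator by an isometric isomorphism, via Proposition~\ref{prp:sigmagamma}, and then to read off both $\sigma_{S\star T}(X)$ and $\sigma_S(\sigma_T(X))$ from the two factorisations of the triple isomorphism $j_{S,T} : \fock_{S)} \otimes \Gamma_S(\fock_{T)}) \otimes \fock_{[S\star T} \to \fock$ supplied by Theorem~\ref{thm:SstarTiso}. I claim that, once transported to this triple tensor product, each of the two endomorphisms acts as the identity on the first two legs and as $\Gamma_{S\star T} X \Gamma_{S\star T}^*$ on the last; equality of $\sigma_{S\star T}$ and $\sigma_S \comp \sigma_T$ then follows at once.

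For the $\sigma_{S\star T}$ side, Proposition~\ref{prp:sigmagamma} gives $\sigma_{S\star T}(X) = j_{S\star T}(I_{S\star T)} \otimes \Gamma_{S\star T} X \Gamma_{S\star T}^*) j_{S\star T}^*$. The final identity of Theorem~\ref{thm:SstarTiso} reads $j_{S,T} = j_{S\star T} \comp (k_S \otimes I_{[S\star T})$, where $k_S := j_S|_{\fock_{S)} \otimes \Gamma_S(\fock_{T)})}$ is an isometric isomorphism onto $\fock_{S\star T)}$ by \eqref{eqn:SstarT1}; hence $j_{S\star T} = j_{S,T} \comp (k_S^{-1} \otimes I_{[S\star T})$. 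Substituting and using $k_S^{-1} k_S = I$ on the first two legs together with $\Gamma_{S\star T}^* \Gamma_{S\star T} = I$ collapses the expression to $j_{S,T}(I_{S)} \otimes I_{\Gamma_S(\fock_{T)})} \otimes \Gamma_{S\star T} X \Gamma_{S\star T}^*) j_{S,T}^*$.

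For the $\sigma_S \comp \sigma_T$ side I would apply Proposition~\ref{prp:sigmagamma} twice: first $\sigma_S(\sigma_T(X)) = j_S(I_{S)} \otimes \Gamma_S \sigma_T(X) \Gamma_S^*) j_S^*$, then substitute $\sigma_T(X) = j_T(I_{T)} \otimes \Gamma_T X \Gamma_T^*) j_T^*$. Grouping the isometries gives $\sigma_S(\sigma_T(X)) = \Psi(I_{S)} \otimes I_{T)} \otimes \Gamma_T X \Gamma_T^*) \Psi^*$, where $\Psi := j_S(I_{S)} \otimes \Gamma_S)(I_{S)} \otimes j_T)$ is precisely the product of the first three factors in the explicit factorisation $j_{S,T} = \Psi \comp (I_{S)} \otimes \Gamma_S^* \otimes \Gamma_T \Gamma_{S\star T}^*)$ recorded at the start of the proof of Theorem~\ref{thm:SstarTiso}. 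Writing $\Psi = j_{S,T}(I_{S)} \otimes \Gamma_S \otimes \Gamma_{S\star T}\Gamma_T^*)$ and conjugating, the isometry relations $\Gamma_S \Gamma_S^* = I$ on $\Gamma_S(\fock_{T)})$ and $\Gamma_T^* \Gamma_T = I$ again collapse the middle operator to $I_{S)} \otimes I_{\Gamma_S(\fock_{T)})} \otimes \Gamma_{S\star T} X \Gamma_{S\star T}^*$. Thus $\sigma_S(\sigma_T(X))$ equals the same operator as $\sigma_{S\star T}(X)$, giving $\sigma_{S\star T} = \sigma_S \comp \sigma_T$. The main obstacle here is purely bookkeeping: one must track the domain and codomain of each of $\Gamma_S$, $\Gamma_T$, $\Gamma_{S\star T}$, $j_S$, $j_T$, $j_{S\star T}$ and $j_{S,T}$ so that the tensor leg on which each isometry acts is unambiguous, and then check that the two factorisations of $j_{S,T}$ genuinely produce the same corner operator; the conceptual content is already in Theorem~\ref{thm:SstarTiso}.

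Finally, for the identity $\sigma_S(E_T) = E_{S\star T}$ I would first observe that $\sigma_S(E_0) = E_S$ for every finite stop time $S$. Indeed $\sigma_{\pi_j}(E_0) = E_{\pi_j}$ by Definition~\ref{def:ccrflow}, and $E_{\pi_j}$ commutes with $S\bigl( ( \pi_{j-1}, \pi_j ] \bigr)$ as noted in the proof of Theorem~\ref{thm:expS}, so $\sigma_{S,\pi}(E_0) = E_{S,\pi}$; passing to the strong limit over refinements yields $\sigma_S(E_0) = E_S$ by Theorems~\ref{thm:flowstop} and~\ref{thm:expS}. Applying this with $T$ in place of $S$ and then the homomorphism property already established gives $\sigma_S(E_T) = \sigma_S(\sigma_T(E_0)) = \sigma_{S\star T}(E_0) = E_{S\star T}$.
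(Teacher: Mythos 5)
Your proposal is correct and follows essentially the same route as the paper: both arguments rest on Proposition~\ref{prp:sigmagamma} together with the two factorisations of $j_{S,T}$ from Theorem~\ref{thm:SstarTiso}, reducing $\sigma_{S\star T}(X)$ and $\sigma_S(\sigma_T(X))$ to the common expression $j_{S,T}( I_{S)} \otimes I_{\Gamma_S( \fock_{T)} )} \otimes \Gamma_{S \star T} X \Gamma_{S \star T}^* ) j_{S,T}^*$ (the paper merely chains these equalities in one direction rather than meeting in the middle). Your treatment of $\sigma_S(E_T) = E_{S\star T}$ -- proving $\sigma_S(E_0) = E_S$ via the Riemann sums $\sigma_{S,\pi}(E_0) = E_{S,\pi}$ and then invoking the homomorphism property -- is exactly the paper's argument.
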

\begin{proof}
Let $X \in \bop{\fock}{}$. Proposition~\ref{prp:sigmagamma} and
Theorem~\ref{thm:SstarTiso} imply that
\begin{align*}
\sigma_S\bigl( \sigma_T( X ) \bigr) = %
j_S( I_{S)} \otimes \Gamma_S j_T( I_{T)} \otimes %
\Gamma_T X \Gamma_T^* ) j_T^* \Gamma_S^* ) j_S^* 
 & = j_{S, T}( I_{S)} \otimes I_{\Gamma_S( \fock_{T)} )} \otimes %
\Gamma_{S \star T} X \Gamma_{S \star T}^* ) j_{S, T}^* \\[1ex]
 & = j_{S \star T}( I_{S \star T)} \otimes %
\Gamma_{S \star T} X \Gamma_{S \star T}^* ) j_{S \star T}^* \\[1ex]
 & = \sigma_{S \star T}( X ).
\end{align*}
The second identity follows from this; with the notation used in the
proof of Theorem~\ref{thm:expS},
\[
\sigma_S( E_0 ) = \stlim_\pi \sigma_{S, \pi}( E_0 ) = %
\stlim_\pi E_{S, \pi} = E_S
\]
and therefore
\[
\sigma_S( E_T ) = \sigma_S\bigl( \sigma_T( E_0 ) \bigr) = %
\sigma_{S \star T}( E_0 ) = E_{S \star T}.
\qedhere
\]
\end{proof}

\section{Isometric cocycles}

\begin{notation}
Let $\vna \subseteq \bop{\ini}{}$ be a von~Neumann algebra, where
$\ini$ is a complex Hilbert space, and let $\uwkten$ denote the
ultraweak tensor product. Henceforth, $S$ is a finite quantum stop
time extended by ampliation to act on $\ini \otimes \fock$; the time
projection~$E_S$ extends by ampliation in the same manner.
\end{notation}

\begin{definition}
Let $p$ be an orthogonal projection in $\bop{\mul}{}$ which acts
pointwise on~$L^2\bigl( [ t, \infty ) ; \mul \bigr)$ for all
$t \in \R_+$, so that
\[
( p f )( s ) := p f( s ) \qquad \text{for all } s \in [ t, \infty ) %
\text{ and } f \in L^2\bigl( [ t, \infty ); \mul \bigr),
\]
and let $P_{[t} \in \bop{\fock_{[t}}{}$ be the second quantisation
of~$p$ acting in this way, so that $P_{[t} \evec{f} = \evec{p f}$ for
all $f \in L^2\bigl( [ t, \infty ); \mul \bigr)$.

A family of operators
$V = ( V_t )_{t \in \R_+} \subseteq \vna \uwkten \bop{\fock}{}$ is a
\textit{$p$-adapted process} if and only if
\begin{equation}\label{eqn:padapt}
V_t = V_{t)} \otimes P_{[t} \qquad \text{for all } t \in \R_+,
\end{equation}
where $V_{t)} \in \vna \uwkten \bop{\fock_{t)}}{}$. If $p = I_\mul$
then this is identity adaptedness, the usual choice for
Hudson--Parthasarathy quantum stochastic calculus; if $p = 0$ then
this is vacuum adaptedness \cite{Blt01}. Note that $V$ is vacuum
adapted if and only if $V_t = E_t V_t E_t$ for all $t \in \R_+$.

Given a $p$-adapted process $V$, its
\emph{identity-adapted projection} is the process
$\wh{V} = \bigl( \wh{V}_t \bigr)_{t \in \R_+}$, where
\[
\wh{V}_t := V_{t)} \otimes I_{[t} \qquad \text{for all } t \in \R_+.
\]

A $p$-adapted process $V$ is a \emph{cocycle} (more fully, a
\emph{left operator Markovian cocycle}) if and only if
\begin{equation}\label{eqn:defcocycle}
V_{s + t} = \wh{V}_{s} \, \sigma_s( V_t ) %
\qquad \mbox{for all } s, t \in \R_+,
\end{equation}
where $\sigma$ is the CCR flow of Definition~\ref{def:ccrflow},
extended by ampliation.

A $p$-adapted cocycle $V$ is \textit{isometric} if
$V_{t)}^* V_{t)} = I_\ini \otimes I_{t)}$ for all $t \in \R_+$, where
$V_{t)}$ is as in~\eqref{eqn:padapt}; equivalently,
$\wh{V}_t^* \wh{V}_t = I_\ini \otimes I$ for all $t \in \R_+$. For
such an isometric cocycle $V$, it holds that~$\| V_t \| = 1$ and
\[
V_t^* V_t = I_\ini \otimes I_{t)} \otimes P_{[t} \in %
I_\ini \otimes I_{t)} \otimes \bop{\fock_{[t}}{} %
\qquad \text{for all } t \in \R_+.
\]
\end{definition}

\begin{example}\label{eg:weylcocycle}
Let $W( f ) \in \bop{\fock}{}$ be the \emph{Weyl operator}
corresponding to $f \in \elltwo$, so that
\[
W( f ) \evec{g} = %
\exp\bigl( -\hlf \| f \|^2 - \langle f, g \rangle \bigr) %
\evec{f + g} \qquad \text{and} \qquad W( f ) W( g ) = %
\exp\bigl( -\I \langle f, g \rangle \bigr) \evec{f + g}
\]
for all $f$, $g \in \elltwo$. Then
$\bigl( W_t( c ) := %
I_\ini \otimes W( 1_{[ 0, t )} c ) \bigr)_{t \ge 0}$ and
$\bigl( V_t( c ) := %
I_\ini \otimes E_t W( 1_{[ 0, t )} c ) \bigr)_{t \ge 0}$ are
isometric cocycles for all $c \in \mul$, which are $1$-adapted and
$0$-adapted respectively.

More examples of isometric cocycles may be constructed as the
solutions of quantum stochastic differential equations: see
\cite[Section~5]{Lin05} and references therein.
\end{example}

\begin{lemma}\label{lem:uni}
Let $V$ be an isometric $p$-adapted cocycle and let
$t \in ( 0, \infty )$. For any finite partition
$\pi = \{ 0 = \pi_0 < \cdots < \pi_{n + 1} = t \}$, the Riemann sum
\[
V_{S, \pi} := %
\sum_{j = 1}^{n + 1} V_{\pi_j} S\bigl( ( \pi_{j - 1}, \pi_j ] \bigr)
\]
is such that
\[
\| V_{S, \pi} z \| \le \| S\bigl( [ 0, t ] \bigr) z \| %
\qquad \text{for all } z \in \ini \otimes \fock.
\]
\end{lemma}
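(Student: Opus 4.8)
The plan is to compute the square norm $\|V_{S,\pi}z\|^2$ term by term and to show that only the diagonal contributions survive. Writing $S_j := S\bigl( (\pi_{j-1},\pi_j] \bigr)$ for brevity, these are mutually orthogonal projections, since $S$ is a spectral measure and the intervals are disjoint, and their sum is $S\bigl( (0,t] \bigr) = S\bigl( [0,t] \bigr)$ because $S(\{0\}) = 0$. Expanding,
\[
\|V_{S,\pi}z\|^2 = \sum_{j,k=1}^{n+1} \langle S_j z, V_{\pi_j}^* V_{\pi_k} S_k z\rangle,
\]
I would aim to prove that the off-diagonal terms vanish and that each diagonal term satisfies $\|V_{\pi_j}S_j z\|^2 \le \|S_j z\|^2$; summing then gives $\|V_{S,\pi}z\|^2 \le \sum_j \|S_j z\|^2 = \langle z, S([0,t])z\rangle = \|S([0,t])z\|^2$, as required.

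The diagonal bound is immediate from isometry: since $V_{\pi_j}^* V_{\pi_j} = I_\ini \otimes I_{\pi_j)} \otimes P_{[\pi_j}$ is an orthogonal projection, $\|V_{\pi_j}S_j z\|^2 = \langle S_j z, V_{\pi_j}^* V_{\pi_j} S_j z\rangle \le \|S_j z\|^2$. The substance of the argument, which I expect to be the main obstacle, is the vanishing of the cross terms. Fix $j < k$. I would first use the $p$-adaptedness factorisation $V_{\pi_j} = \wh{V}_{\pi_j}\bigl( I_\ini \otimes I_{\pi_j)} \otimes P_{[\pi_j} \bigr)$, together with the relation $\wh{V}_{\pi_j}^* \wh{V}_{\pi_j} = I_\ini \otimes I$ and the cocycle identity $V_{\pi_k} = \wh{V}_{\pi_j}\,\sigma_{\pi_j}(V_{\pi_k - \pi_j})$, to collapse the $\wh{V}_{\pi_j}$ factors, reducing the term to
\[
\langle V_{\pi_j}S_j z, V_{\pi_k}S_k z\rangle = \bigl\langle \bigl( I_\ini \otimes I_{\pi_j)} \otimes P_{[\pi_j} \bigr) S_j z, \sigma_{\pi_j}(V_{\pi_k - \pi_j}) S_k z \bigr\rangle.
\]

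The final step is to invoke adaptedness to commute $S_j$ past the remaining operators and then apply spectral orthogonality. Both $I_\ini \otimes I_{\pi_j)} \otimes P_{[\pi_j}$ and $\sigma_{\pi_j}(V_{\pi_k - \pi_j})$ act as the identity on the tensor factor $\ini \otimes \fock_{\pi_j)}$, whereas $S_j$, being adapted, acts as the identity on $\fock_{[\pi_j}$; hence $S_j$ is self adjoint and commutes with both. Bringing $S_j$ to the right therefore yields $\langle z, \bigl( I_\ini \otimes I_{\pi_j)} \otimes P_{[\pi_j} \bigr)\sigma_{\pi_j}(V_{\pi_k - \pi_j})\, S_j S_k z\rangle$, which vanishes because $S_j S_k = S(\emptyset) = 0$ for $j \ne k$. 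The delicate point is keeping track of which tensor factor each operator occupies, so that the cocycle relation, the isometry hypothesis and the adaptedness of both $S$ and the CCR flow combine to produce the orthogonality; once this bookkeeping is carried out correctly, the estimate follows at once.
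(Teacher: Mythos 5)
Your proposal is correct and takes essentially the same route as the paper: the paper likewise expands $\| V_{S, \pi} z \|^2$, uses the cocycle identity, $p$-adaptedness and isometry to show that $V_r^* V_s$ and $V_s^* V_r$ (for $r \le s$) lie in $\vna \uwkten I_{r)} \uwkten \bop{\fock_{[r}}{}$, kills the cross terms by commuting the identity-adapted spectral projections through and invoking orthogonality, and bounds each diagonal term via the projection $V_{\pi_j}^* V_{\pi_j} = I_\ini \otimes I_{\pi_j)} \otimes P_{[\pi_j}$. The only detail to add is the case $j > k$, which follows at once by conjugate symmetry of the inner product (the paper handles it by recording the localisation of both products $V_r^* V_s$ and $V_s^* V_r$).
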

\begin{proof}
If $r$, $s \in \R_+$ are such that $r \le s$ then, by the cocycle,
$p$-adaptedness and isometry properties,
\begin{equation}\label{eqn:cocycle1}
V_s^* V_r = \sigma_r( V_{s - r}^* ) \, ( V_{r)}^* \otimes I_{[r} ) %
( V_{r)} \otimes P_{[r} ) \in %
\vna \uwkten I_{r)} \uwkten \bop{\fock_{[r}}{}
\end{equation}
and
\begin{equation}\label{eqn:cocycle2}
V_r^* V_s = ( V_{r)}^* \otimes P_{[r} )( V_{r)}^* \otimes I_{[r} )
\, \sigma_r( V_{s - r} ) %
\in \vna \uwkten I_{r)} \uwkten \bop{\fock_{[r}}{}.
\end{equation}
Consequently, if $j$, $k \in \{ 1, \ldots, n + 1 \}$ are such that
$j \neq k$ then, as $S$ is identity adapted,
\[
S\bigl( ( \pi_j, \pi_{j + 1} ] \bigr) V_{\pi_{j + 1}}^* %
V_{\pi_{k + 1}} S\bigl( ( \pi_k, \pi_{k + 1} ] \bigr) = 0,
\]
and thus, for all $z \in \ini \otimes \fock$, we have that
\[
\| V_{S, \pi} z \|^2 = %
\sum_{j = 1}^{n + 1} \| %
( I_\ini \otimes I_{\pi_j)} \otimes P_{[\pi_j} ) %
S\bigl( ( \pi_{j - 1}, \pi_j ] \bigr) z \|^2 \le %
\sum_{j = 1}^{n + 1} %
\| S\bigl( ( \pi_{j - 1}, \pi_j ] \bigr) z \|^2 = %
\| S\bigl( [ 0, t ] \bigr) z \|^2.
\qedhere
\]
\end{proof}

\begin{theorem}\label{thm:stopcocycle}
Let $V$ be an isometric $p$-adapted cocycle and let
$t \in ( 0, \infty )$. If $V$ is strongly continuous then
\[
V_{S, t} := \stlim_\pi V_{S, \pi}
\]
is well defined on $\ini \otimes \fock$. Furthermore,
$V_{S, t} = V_{S, t} S\bigl( [ 0, t ] \bigr)$ and
\[
\| V_{S, t} z \| \le \| S\bigl( [ 0, t ] \bigr) z \| %
\qquad \mbox{for all } z \in \ini \otimes \fock.
\]
\end{theorem}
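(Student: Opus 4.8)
The plan is to mimic the existence arguments of Lemma~\ref{lem:key} and Theorem~\ref{thm:flowstop}: show that the net $\bigl( V_{S, \pi} z \bigr)_\pi$ is Cauchy as the partition $\pi$ of $[ 0, t ]$ is refined, doing so first on the total set of vectors $u \otimes \evec{f}$ with $u \in \ini$ and $f \in \elltwo$, and then extending to all $z \in \ini \otimes \fock$ by means of the uniform bound $\| V_{S, \pi} \| \le 1$ furnished by Lemma~\ref{lem:uni}. Since $t < \infty$, there is no tail term at infinity to control. Given a refinement $\pi'$ of $\pi$, let $m( i ) \ge \pi'_i$ denote the right-hand endpoint of the $\pi$-cell containing $( \pi'_{i - 1}, \pi'_i ]$; as $V_{m( i )} S\bigl( ( \pi'_{i - 1}, \pi'_i ] \bigr)$ sums over each $\pi$-cell to the corresponding summand of $V_{S, \pi}$,
\[
\bigl( V_{S, \pi'} - V_{S, \pi} \bigr) z =
\sum_i \bigl( V_{\pi'_i} - V_{m( i )} \bigr)
S\bigl( ( \pi'_{i - 1}, \pi'_i ] \bigr) z.
\]

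The key point is that the cross terms in $\| ( V_{S, \pi'} - V_{S, \pi} ) z \|^2$ vanish, by a strengthening of the orthogonality used in Lemma~\ref{lem:uni}. For $i < i'$ (the case $i > i'$ following by taking adjoints) and any $a \in \{ \pi'_i, m( i ) \}$, $b \in \{ \pi'_{i'}, m( i' ) \}$, the relations \eqref{eqn:cocycle1} and \eqref{eqn:cocycle2} place $V_a^* V_b$ in $\vna \uwkten I_{r)} \uwkten \bop{\fock_{[r}}{}$ with $r := \min( a, b )$; as $a \ge \pi'_i$ and $b \ge \pi'_{i'} > \pi'_i$, we have $r \ge \pi'_i$, so $V_a^* V_b$ acts as the identity on $\fock_{\pi'_i )}$, the factor carrying $S\bigl( ( \pi'_{i - 1}, \pi'_i ] \bigr)$. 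Hence $V_a^* V_b$ commutes with that projection and
\[
S\bigl( ( \pi'_{i - 1}, \pi'_i ] \bigr) V_a^* V_b
S\bigl( ( \pi'_{i' - 1}, \pi'_{i'} ] \bigr) =
V_a^* V_b \, S\bigl( ( \pi'_{i - 1}, \pi'_i ] \bigr)
S\bigl( ( \pi'_{i' - 1}, \pi'_{i'} ] \bigr) = 0
\]
by disjointness of the two intervals. Thus $\| ( V_{S, \pi'} - V_{S, \pi} ) z \|^2 = \sum_i \| ( V_{\pi'_i} - V_{m( i )} ) S( ( \pi'_{i - 1}, \pi'_i ] ) z \|^2$. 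To estimate each summand I would invoke the cocycle relation~\eqref{eqn:defcocycle} in the form $V_{m( i )} - V_{\pi'_i} = \wh{V}_{\pi'_i} \sigma_{\pi'_i}( V_{m( i ) - \pi'_i} - V_0 )$; as $\wh{V}_{\pi'_i}$ is isometric and $S\bigl( ( \pi'_{i - 1}, \pi'_i ] \bigr) ( u \otimes \evec{f} ) = u \otimes w_i \otimes \evec{f|_{[ \pi'_i, \infty )}}$ for some $w_i \in \fock_{\pi'_i )}$ by identity adaptedness, the action of $\sigma_{\pi'_i}$, which is the identity on $\fock_{\pi'_i )}$, combined with $\Gamma_{\pi'_i}^* \evec{f|_{[ \pi'_i, \infty )}} = \evec{f( \cdot + \pi'_i )}$, yields
\[
\| ( V_{\pi'_i} - V_{m( i )} ) S( ( \pi'_{i - 1}, \pi'_i ] ) ( u \otimes \evec{f} ) \| =
\| w_i \| \, \| ( V_{m( i ) - \pi'_i} - V_0 ) ( u \otimes \evec{f( \cdot + \pi'_i )} ) \|.
\]

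The crux is then a uniform strong-continuity estimate. Since $0 \le m( i ) - \pi'_i \le \delta_\pi$, the mesh of $\pi$, and the shift family $\{ u \otimes \evec{f( \cdot + s )} : s \in [ 0, t ] \}$ is compact (translation being strongly continuous on $\elltwo$), the strong continuity of $V$ together with $\| V_r \| = 1$ shows that $\omega( \delta_\pi ) := \sup\{ \| ( V_r - V_0 ) ( u \otimes \evec{f( \cdot + s )} ) \| : 0 \le r \le \delta_\pi, \ s \in [ 0, t ] \} \to 0$ as $\delta_\pi \to 0$. Combining this with $\sum_i \| w_i \|^2 \le \| \evec{f} \|^2$ (valid because $\| \evec{f|_{[ \pi'_i, \infty )}} \| \ge 1$ and the summands are the orthogonal pieces of $S( [ 0, t ] ) \evec{f}$) gives $\| ( V_{S, \pi'} - V_{S, \pi} ) ( u \otimes \evec{f} ) \| \le \omega( \delta_\pi ) \| \evec{f} \| \to 0$, and the triangle inequality extends the Cauchy property to finite linear combinations; the uniform bound then delivers $V_{S, t} = \stlim_\pi V_{S, \pi}$ on all of $\ini \otimes \fock$. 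The two remaining assertions follow by passing properties of the approximants to the limit: $\| V_{S, \pi} z \| \le \| S( [ 0, t ] ) z \|$ gives the stated norm bound, while $V_{S, \pi} = V_{S, \pi} S( [ 0, t ] )$ (since each cell $( \pi_{j - 1}, \pi_j ]$ lies in $[ 0, t ]$) gives $V_{S, t} = V_{S, t} S( [ 0, t ] )$. I expect the main obstacle to lie in the orthogonality step — checking that the cross terms vanish for the mismatched indices $a, b$, rather than merely for the endpoint indices of Lemma~\ref{lem:uni}, since here $V$ and $S$ no longer commute freely — together with making the strong-continuity convergence uniform over the compact shift family.
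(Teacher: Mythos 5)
Your proposal is correct, and its skeleton coincides with the paper's: the same telescoping decomposition of $V_{S, \pi'} - V_{S, \pi}$ (your $m(i)$ is the paper's $\pi'_{k_j + l_j}$), the same cross-term orthogonality via \eqref{eqn:cocycle1}, \eqref{eqn:cocycle2} and the adaptedness of $S$, the same extension from $\ini \algten \evecs$ via the contraction bound of Lemma~\ref{lem:uni}, and the same limiting arguments for the final two claims. The genuine difference is the diagonal estimate. The paper keeps each increment intact: it notes that $( V_r - V_q )^* ( V_r - V_q ) \in \bop{\ini}{} \uwkten I_{q)} \uwkten \bop{\fock_{[q}}{}$ and proves a factorization of matrix elements against $u\evec{f}$, which yields
\[
\| ( V_r - V_q ) S\bigl( ( p, q ] \bigr) u\evec{f} \| \le
\| ( V_r - V_q ) u\evec{f} \| \,
\| S\bigl( ( p, q ] \bigr) \evec{f} \|,
\]
so that only uniform continuity of the single orbit $r \mapsto V_r u\evec{f}$ on the compact interval $[ 0, t ]$ is needed, which is automatic from strong continuity; no shifted vectors appear. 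You instead translate each increment to the origin with the cocycle identity, $V_{m(i)} - V_{\pi'_i} = \wh{V}_{\pi'_i} \, \sigma_{\pi'_i}( V_{m(i) - \pi'_i} - V_0 )$, and use the isometry of $\wh{V}$ and of $\Gamma$; the price is the equicontinuity argument over the compact family $\{ u \otimes \evec{f( \cdot + s )} : s \in [ 0, t ] \}$ --- the same finite-net device the paper spells out in proving Theorem~\ref{thm:flowstop} --- while the dividend is that only strong continuity of $V$ at time $0$ is actually used, the cocycle identity propagating it along $[ 0, t ]$. Your subsidiary steps all check out: the Pythagoras identity, the bound $\sum_i \| w_i \|^2 \le \| \evec{f} \|^2$ (valid since $\| \evec{g} \| \ge 1$ for every $g$), and the fact that $\sigma_{\pi'_i}( \cdot )$ acts trivially on the $\fock_{\pi'_i)}$ factor carrying $w_i$, so the unknown vector $w_i$ contributes only its norm.
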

\begin{proof}
Let $\pi'$ be a refinement of
$\pi = \{ 0 = \pi_0 < \cdots < \pi_{n + 1} = t \}$ and, for $j = 0$,
\ldots, $m$, let~$k_j \ge j$ be such that $\pi'_{k_j} = \pi_j$ and
let~$l_j \ge 1$ be such that $\pi'_{k_j + l_j} = \pi_{j + 1}$. Then
\[
V_{S, \pi'} - V_{S, \pi} = %
\sum_{j = 0}^n \sum_{l = 1}^{l_j} %
( V_{\pi'_{k_j + l}} - V_{\pi'_{k_j + l_j}} ) %
S\bigl( ( \pi'_{k_j + l - 1}, \pi'_{k_j + l} ] \bigr).
\]
If $a$, $b$, $c$, $p$, $q$, $r \in \R_+$ are such that $a < b \le c$
and $p < q \le r$ then, by \eqref{eqn:cocycle1} and
\eqref{eqn:cocycle2},
\begin{align*}
( V_c - V_b )^* ( V_r - V_q ) = %
V_c^* V_r - V_c^* V_q - V_b^* V_r + V_b^* V_q \in %
\bop{\ini}{} \uwkten I_{b \wedge q)} \uwkten %
\bop{\fock_{[b \wedge q}}{}
\end{align*}
and therefore
\[
S\bigl( ( a, b ] \bigr) ( V_c - V_b )^* %
( V_r - V_q ) S\bigl( ( p, q ] \bigr) = \left\{ %
\begin{array}{ll}
( V_c - V_b )^* ( V_r - V_q ) S\bigl( ( a, b ] \bigr) %
S\bigl( ( p, q ] \bigr) & %
\mbox{if } b \le q, \\[1ex]
S\bigl( ( a, b ] \bigr) S\bigl( ( p, q ] \bigr) %
( V_c - V_b )^* ( V_r - V_q ) & \mbox{if } q \le b.
\end{array}\right.
\]
Furthermore, if $Z = X \otimes I_{q)} \otimes Y$, where $X \in \vna$
and $Y \in \bop{\fock_{[q}}{}$, then
\begin{align*}
\langle u\evec{f}, Z S\bigl( ( p, q ] \bigr) %
u\evec{f} \rangle & = \langle u\evec{1_{[ q, \infty )} f}, %
( X \otimes Y ) u\evec{1_{[ q, \infty )} f} \rangle \, %
\langle \evec{1_{[ 0, q)} f}, S\bigl( ( p, q ] \bigr) %
\evec{1_{[ 0, q )} f} \rangle \\[1ex]
& = \langle u\evec{f}, Z u\evec{f} \rangle \, %
\langle \evec{f}, S\bigl( ( p, q ] \bigr) \evec{f} \rangle \, %
\| \evec{f} \|^{-2}
\end{align*}
and therefore
\[
\| ( V_r - V_q ) S\bigl( ( p, q ] \bigr) u\evec{f} \|^2 \le %
\| ( V_r - V_q ) u\evec{f} \|^2 %
\| S\bigl( ( p, q ] \bigr) \evec{f} \|^2.
\]
Hence
\begin{align*}
\| ( V_{S, \pi'} - V_{S, \pi} ) u \evec{f} \|^2 & = %
\sum_{j = 0}^n \sum_{l = 1}^{l_j} %
\bigl\| ( V_{\pi'_{k_j + l}} - V_{\pi'_{k_j + l_j}} ) %
S\bigl( ( \pi'_{k_j + l - 1}, \pi'_{k_j + l} ] \bigr) u\evec{f} %
\bigr\|^2 \\[1ex]
 & \le \sum_{j = 0}^n \sum_{l = 1}^{l_j} %
\| ( V_{\pi'_{k_j + l}} - V_{\pi'_{k_j + l_j}} ) u\evec{f} \|^2 %
\| S\bigl( ( \pi'_{k_j + l - 1}, \pi'_{k_j + l} ] \bigr) \evec{f} \|^2
\\[1ex]
 & \le %
\sup\{ \| ( V_r - V_{\pi_j} ) u\evec{f} \|^2 : %
r \in [ \pi_j, \pi_{j + 1} ], \ j = 0, \ldots, n \} \, %
\| S\bigl( [ 0, t ] \bigr) \evec{f} \|^2.
\end{align*}
This gives the first claim on the algebraic tensor product
$\ini \algten \evecs$. It follows from Lemma~\ref{lem:uni}
that~$V_{S, t}$ is a contraction on $\ini \algten \evecs$, so it
extends to a bounded operator on the whole of~$\ini \otimes \fock$,
and an approximation argument now gives strong convergence everywhere.

The second claim holds because
$V_{S, \pi} S\bigl( [ 0, t ] \bigr) = V_{S, \pi}$; the third is an
immediate consequence of Lemma~\ref{lem:uni}.
\end{proof}

\begin{corollary}
If the isometric $p$-adapted cocycle $V$ is strongly continuous then
\[
V_S = V_{S, \infty} := \stlim_{t \to \infty} V_{S, t}
\]
is well defined.
\end{corollary}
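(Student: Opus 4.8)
The plan is to establish the existence of $V_S = \stlim_{t \to \infty} V_{S, t}$ by showing that the net $(V_{S, t})_{t \in (0, \infty)}$ is strongly Cauchy as $t \to \infty$. The key observation is that Theorem~\ref{thm:stopcocycle} already provides the building blocks: each $V_{S, t}$ is a contraction satisfying $V_{S, t} = V_{S, t} S\bigl( [ 0, t ] \bigr)$. First I would exploit this adaptedness relation to control the difference $V_{S, t'} - V_{S, t}$ for $t' > t$. Writing $S\bigl( [ 0, t' ] \bigr) = S\bigl( [ 0, t ] \bigr) + S\bigl( ( t, t' ] \bigr)$ and using the orthogonality of these spectral increments, I expect the discrepancy between $V_{S, t'}$ and $V_{S, t}$ to be governed entirely by the mass of $S$ on the interval $( t, \infty )$, which tends to $0$ as $t \to \infty$ precisely because $S$ is a \emph{finite} quantum stop time, so that $S\bigl( \{ \infty \} \bigr) = 0$ and hence $\| S\bigl( ( t, \infty ] \bigr) z \| \to 0$.

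The central estimate I would pursue is
\[
\| ( V_{S, t'} - V_{S, t} ) z \| \le \| S\bigl( ( t, t' ] \bigr) z \|
\qquad \text{for all } z \in \ini \otimes \fock \text{ and } t < t'.
\]
To obtain this, I would work at the level of Riemann sums: fix a partition $\pi$ of $[ 0, t ]$ and a partition $\pi'$ of $[ 0, t' ]$ extending it, so that $V_{S, \pi'} - V_{S, \pi} = \sum_{j} V_{\pi_j} S\bigl( ( \pi_{j - 1}, \pi_j ] \bigr)$ with the sum running only over the new subintervals contained in $( t, t' ]$. The orthogonality argument from the proof of Lemma~\ref{lem:uni}, based on the vanishing of the cross-terms $S\bigl( ( \pi_j, \pi_{j + 1} ] \bigr) V_{\pi_{j + 1}}^* V_{\pi_{k + 1}} S\bigl( ( \pi_k, \pi_{k + 1} ] \bigr)$ for $j \neq k$, then yields $\| ( V_{S, \pi'} - V_{S, \pi} ) z \|^2 \le \| S\bigl( ( t, t' ] \bigr) z \|^2$, and passing to the strong limit in $\pi$ and $\pi'$ preserves this bound.

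With the Cauchy estimate in hand, the conclusion follows quickly: since $S\bigl( ( t, \infty ] \bigr) \to 0$ strongly as $t \to \infty$ (finiteness of $S$), the net $(V_{S, t} z)$ is Cauchy in $\ini \otimes \fock$ for each fixed $z$, and hence converges; uniform boundedness $\| V_{S, t} \| \le 1$ ensures the limit $V_S$ is again a contraction defined on the whole space. The main obstacle I anticipate is verifying carefully that the increments $V_{S, t}$ are genuinely \emph{compatible} across different terminal times—that is, that refining the terminal interval does not disturb the part of the integral already built on $[ 0, t ]$—so that the telescoping above is legitimate; this hinges on the identity $V_{S, t} = V_{S, t} S\bigl( [ 0, t ] \bigr)$ together with the adaptedness of $S$, and should reduce to the same orthogonality phenomenon already central to Lemma~\ref{lem:uni} and Theorem~\ref{thm:stopcocycle}.
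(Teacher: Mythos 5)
Your proposal is correct and follows essentially the same route as the paper: the paper likewise fixes $s < t$, takes a partition of $[0, t]$ containing $s$, uses the orthogonality of cross-terms from Lemma~\ref{lem:uni} to get $\| ( V_{S, \pi} - V_{S, \pi'} ) z \|^2 = \sum_j \| V_{\pi_j} S\bigl( ( \pi_{j - 1}, \pi_j ] \bigr) z \|^2 \le \| S\bigl( ( s, t ] \bigr) z \|^2$, and passes to the limit to obtain $\| ( V_{S, t} - V_{S, s} ) z \| \le \| S\bigl( ( s, t ] \bigr) z \|$, whence the net is Cauchy by finiteness of $S$. Your explicit attention to the compatibility of partitions across terminal times and to the role of $S\bigl( \{ \infty \} \bigr) = 0$ only makes explicit what the paper leaves implicit.
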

\begin{proof}
Suppose $s$, $t \in \R_+$ are such that $s < t$ and let
$\pi = \{ 0 = t_0 < \cdots < t_{n + 1} = t \}$ be a partition of
$[ 0, t ]$ with $\pi_m = s$. If $z \in \ini \otimes \fock$ then
\[
\| ( V_{S, \pi} - V_{S, \pi'} ) z \|^2 = \sum_{j = m + 1}^{n + 1} \| %
V_{\pi_j} S\bigl( ( \pi_{j - 1}, \pi_j ] \bigr) z \|^2 \le %
\| S\bigl( ( s, t ] \bigr) z \|^2.
\]
Hence
$\| ( V_{S, t} - V_{S, s} ) z \| \le \| S\bigl( ( s, t ] \bigr) z \|$,
which establishes the Cauchy nature of the net
$( V_{S, t} z )_{t \in \R_+}$ for every $z \in \ini \otimes \fock$,
and from this follows the existence of the limit.
\end{proof}

\begin{proposition}\label{prp:vnorm}
Let $V$ be an isometric vacuum-adapted cocycle which is strongly
continuous and let $t \in ( 0, \infty ]$. Then
\[
V_{S, t} E_{S, t} = V_{S, t} \qquad \text{and} \qquad %
\| V_{S, t} z \| = \| E_{S, t} z \| %
\qquad \text{for all } z \in \ini \otimes \fock.
\]
\end{proposition}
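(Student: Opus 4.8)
The plan is to establish, for every finite partition $\pi = \{ 0 = \pi_0 < \cdots < \pi_{n + 1} = t \}$, the single exact operator identity $V_{S, \pi}^* V_{S, \pi} = E_{S, \pi}$ on $\ini \otimes \fock$, and then to deduce both assertions by refining $\pi$. Here $E_{S, \pi} = \sum_{j = 1}^{n + 1} S(( \pi_{j - 1}, \pi_j ]) E_{\pi_j}$ is the Riemann sum from the proof of Theorem~\ref{thm:expS}, ampliated to $\ini \otimes \fock$.

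To prove the identity I would expand $V_{S, \pi}^* V_{S, \pi} = \sum_{j, k} S(( \pi_{j - 1}, \pi_j ]) V_{\pi_j}^* V_{\pi_k} S(( \pi_{k - 1}, \pi_k ])$. The off-diagonal terms vanish, since $S(( \pi_{j - 1}, \pi_j ]) V_{\pi_j}^* V_{\pi_k} S(( \pi_{k - 1}, \pi_k ]) = 0$ whenever $j \neq k$; this is precisely the computation carried out in the proof of Lemma~\ref{lem:uni} from \eqref{eqn:cocycle1}, \eqref{eqn:cocycle2} and the identity adaptedness of $S$. For the diagonal I use vacuum adaptedness: the isometry property gives $V_{\pi_j}^* V_{\pi_j} = I_\ini \otimes I_{\pi_j)} \otimes P_{[\pi_j}$, and because $p = 0$ the vacuum projection satisfies $I_{\pi_j)} \otimes P_{[\pi_j} = E_{\pi_j}$, so $V_{\pi_j}^* V_{\pi_j} = E_{\pi_j}$ after ampliation. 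As $E_{\pi_j}$ commutes with $S(( \pi_{j - 1}, \pi_j ])$ (noted in the proof of Theorem~\ref{thm:expS}) and both are idempotent, each diagonal term collapses to $S(( \pi_{j - 1}, \pi_j ]) E_{\pi_j}$, and summing over $j$ returns $E_{S, \pi}$.

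Both assertions then follow by passing to the limit. Since $E_{S, \pi}$ is an orthogonal projection, the identity gives $\| V_{S, \pi} z \|^2 = \langle z, E_{S, \pi} z \rangle = \| E_{S, \pi} z \|^2$ for every $z$; as $V_{S, \pi} z \to V_{S, t} z$ by Theorem~\ref{thm:stopcocycle} and $E_{S, \pi} z \to E_{S, t} z$ by Theorem~\ref{thm:expS}, refining $\pi$ yields $\| V_{S, t} z \| = \| E_{S, t} z \|$. For the first assertion I would instead observe, using the same commutation together with the vacuum-adaptedness relation $V_{\pi_j} E_{\pi_j} = V_{\pi_j}$, that $V_{S, \pi} E_{S, \pi} = V_{S, \pi}$; since both nets are uniformly bounded and converge strongly, the product converges strongly to $V_{S, t} E_{S, t}$, which must therefore equal $V_{S, t}$.

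Finally, the case $t = \infty$ is obtained by the further limit $t \to \infty$: the preceding corollary gives $V_{S, t} \to V_{S, \infty} = V_S$ strongly, while the $E_{S, t}$ increase strongly to $E_{S, \infty} = E_S$ because $E_{S, s} E_{S, t} = E_{S, s \wedge t}$ and $\fock_{S)} = \bigcup_t \im E_{S, t}$, so both displayed equalities survive. I do not anticipate a serious obstacle; the only point demanding care is that multiplication and the adjoint are not jointly strongly continuous, which is why the norm identity is best argued at the level of vectors (equivalently, the operator identity $V_{S, t}^* V_{S, t} = E_{S, t}$ should be read in the weak operator topology) rather than by naively multiplying strong limits.
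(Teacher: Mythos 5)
Your proposal is correct and essentially coincides with the paper's own proof: the paper likewise works at the level of a fixed partition, obtaining $V_{S, \pi} E_{S, \pi} = V_{S, \pi}$ from vacuum adaptedness and the commutation of $E_{\pi_j}$ with $S\bigl( ( \pi_{j - 1}, \pi_j ] \bigr)$, and obtaining $\| V_{S, \pi} z \| = \| E_{S, \pi} z \|$ from the orthogonality working in Lemma~\ref{lem:uni} --- your operator identity $V_{S, \pi}^* V_{S, \pi} = E_{S, \pi}$ is exactly that computation written in operator (polarised) form. Both arguments then conclude in the same way, by refining $\pi$ and letting $t \to \infty$.
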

\begin{proof}
Suppose first that $t \in ( 0, \infty )$ and let
$\pi = \{ 0 = \pi_0 < \cdots < \pi_{n + 1} = t\}$. Then, with the
notation used in the proof of Theorem~\ref{thm:expS},
\[
V_{S, \pi} E_{S, \pi} = %
\sum_{j = 1}^{n + 1} V_{\pi_j} S\bigl( ( \pi_{j - 1}, \pi_j ] \bigr) %
\sum_{k = 1}^{n + 1} S\bigl( ( \pi_{k - 1}, \pi_k ] \bigr) E_{\pi_k} %
= V_{S, \pi},
\]
since $V$ is vacuum adapted and $E_{\pi_j}$ commutes with
$S\bigl( ( \pi_{j - 1}, \pi_j ] \bigr)$ for all $j$. Furthermore, the
working in the proof of Lemma~\ref{lem:uni} shows that
\[
\| V_{S, \pi} z \|^2 = %
\sum_{j = 1}^{n + 1} \| S\bigl( ( \pi_{j - 1}, \pi_j ] \bigr) %
E_{\pi_j} z \|^2 = \| E_{S, \pi} z \|^2 %
\qquad \text{for all } z \in \ini \otimes \fock.
\]
Refining $\pi$ now gives both identities as claimed; the remaining
case follows by letting $t \to \infty$.
\end{proof}

\begin{proposition}\label{prp:inorm}
Let $V$ be an isometric identity-adapted cocycle which is strongly
continuous and let $t \in ( 0, \infty ]$. Then
\[
\| V_{S, t} z \| = \| S\bigl( [ 0, t ] \bigr) z \| %
\qquad \text{for all } z \in \ini \otimes \fock.
\]
\end{proposition}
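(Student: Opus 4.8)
The plan is to follow the proof of Proposition~\ref{prp:vnorm} almost verbatim, exploiting that identity adaptedness is the special case $p = I_\mul$, for which $P_{[t} = I_{[t}$ for every $t$. First I would fix $t \in ( 0, \infty )$ together with a partition $\pi = \{ 0 = \pi_0 < \cdots < \pi_{n + 1} = t \}$, and recall the orthogonality recorded in the proof of Lemma~\ref{lem:uni}: because $S$ is identity adapted and the cocycle identities \eqref{eqn:cocycle1} and \eqref{eqn:cocycle2} hold, the operators $V_{\pi_j} S\bigl( ( \pi_{j - 1}, \pi_j ] \bigr)$ for distinct $j$ send a fixed vector to mutually orthogonal images, so that $\| V_{S, \pi} z \|^2$ expands as a sum of squared norms.

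The point is that this Pythagorean expansion degenerates far more drastically here than in the vacuum-adapted setting. Indeed, the computation in the proof of Lemma~\ref{lem:uni} yields
\[
\| V_{S, \pi} z \|^2 = \sum_{j = 1}^{n + 1}
\| ( I_\ini \otimes I_{\pi_j)} \otimes P_{[\pi_j} )
S\bigl( ( \pi_{j - 1}, \pi_j ] \bigr) z \|^2,
\]
and, since $p = I_\mul$, the projection $I_\ini \otimes I_{\pi_j)} \otimes P_{[\pi_j}$ in front of each summand is simply $I_\ini \otimes I$. Each term therefore equals $\| S\bigl( ( \pi_{j - 1}, \pi_j ] \bigr) z \|^2$, and the sum collapses to $\| S\bigl( [ 0, t ] \bigr) z \|^2$; that is, the inequality of Lemma~\ref{lem:uni} is now an equality at the level of Riemann sums, valid for every $\pi$.

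The remaining steps are routine. For finite $t$ I would refine $\pi$: Theorem~\ref{thm:stopcocycle} gives $V_{S, \pi} z \to V_{S, t} z$ strongly, and since $\| V_{S, \pi} z \|$ is constantly equal to $\| S\bigl( [ 0, t ] \bigr) z \|$ along the net, passing to the limit gives $\| V_{S, t} z \| = \| S\bigl( [ 0, t ] \bigr) z \|$. For $t = \infty$ I would let $t \to \infty$ as at the end of Proposition~\ref{prp:vnorm}, using that $V_{S, t} z \to V_S z$ and that $\| S\bigl( [ 0, t ] \bigr) z \| \to \| z \|$, the latter because finiteness of $S$ forces $S\bigl( [ 0, \infty ) \bigr) = I$. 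I anticipate no real obstacle: the entire content is the observation that the contractive projection of Lemma~\ref{lem:uni} reduces to the identity when $p = I_\mul$. The only points needing a moment's care are the vanishing of the cross terms, which is already supplied by Lemma~\ref{lem:uni}, and the strong convergence $S\bigl( [ 0, t ] \bigr) \to I$ used in the passage to $t = \infty$.
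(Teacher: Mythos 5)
Your proposal is correct and follows exactly the paper's own argument: the paper likewise invokes the Pythagorean expansion from the proof of Lemma~\ref{lem:uni}, notes that it yields $\| V_{S, \pi} z \| = \| S\bigl( [ 0, t ] \bigr) z \|$ for every partition $\pi$ (the projections being trivial in the identity-adapted case), and concludes by refining $\pi$ and letting $t \to \infty$. Your write-up merely makes explicit the limiting steps that the paper leaves to the reader.
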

\begin{proof}
If $\pi$ is a finite partition of $[ 0, t ]$, where
$t \in ( 0, \infty )$, and $z \in \ini \otimes \fock$ then the proof
of Lemma~\ref{lem:uni} gives that
$\| V_{S, \pi} z \| = \| S\bigl( [ 0, t ] \bigr) z \|$.
The result follows by refining $\pi$ and then letting~$t \to \infty$.
\end{proof}

\begin{remark}
Propositions~\ref{prp:vnorm} and~\ref{prp:inorm} imply that $V_{S, t}$
is a partial isometry for all $t \in [ 0, \infty ]$ if~$V$ is vacuum
adapted or identity adapted, and $V_S$ is an isometry in the latter
case.
\end{remark}

\section{A stopped cocycle relation}\label{sec:relation}

\begin{notation}
As in the preceding section, $S$ is a finite quantum stop time
extended by ampliation to act on $\ini \otimes \fock$; the maps $E_S$,
$\Gamma_S$ and $\sigma_S$ extend similarly.
\end{notation}

\begin{theorem}\label{thm:cocyclerel}
Let $S$ be a finite quantum stop time and let $V$ be an isometric
$p$-adapted cocycle. Then
\begin{equation}\label{eqn:stoppedcocycle}
V_{S + t} = \wh{V}_S \, \sigma_S( V_t ) %
\qquad \text{for all } t \in \R_+.
\end{equation}
\end{theorem}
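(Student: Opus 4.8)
The plan is to prove \eqref{eqn:stoppedcocycle} by approximating both sides with Riemann sums over a refining net of partitions of $[0,\infty]$ and matching them termwise. The key structural fact I would exploit is that $\wh{V}_S\,\sigma_S(V_t)$ should be expressible through the same stop-time-integral machinery that defines $V_S$ and $\sigma_S$. Since Theorem~\ref{thm:stopcocycle} builds $V_{S,t}$ (and hence $V_S = V_{S,\infty}$) as $\stlim_\pi \sum_{j} V_{\pi_j} S\bigl((\pi_{j-1},\pi_j]\bigr)$, and Theorem~\ref{thm:flowstop} builds $\sigma_S$ as $\stlim_\pi \sum_k \sigma_{\pi_k}(\,\cdot\,) S\bigl((\pi_{k-1},\pi_k]\bigr)$, I would first compute the product of the corresponding finite sums $\wh{V}_{S,\pi}\,\sigma_{S,\pi}(V_t)$ for a single partition $\pi$ and identify which cross terms survive.

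First I would write out $\wh{V}_{S,\pi}\,\sigma_{S,\pi}(V_t) = \sum_{j,k} \wh{V}_{\pi_j} S\bigl((\pi_{j-1},\pi_j]\bigr)\sigma_{\pi_k}(V_t) S\bigl((\pi_{k-1},\pi_k]\bigr)$ and use the identity adaptedness of $S$ together with the adaptedness of $\wh{V}_{\pi_j}$ and the range condition $\sigma_{\pi_k}(V_t)\in I_{\pi_k)}\otimes\bop{\fock_{[\pi_k}}{}$ to kill the off-diagonal terms, leaving a single sum $\sum_j \wh{V}_{\pi_j}\,\sigma_{\pi_j}(V_t)\, S\bigl((\pi_{j-1},\pi_j]\bigr)$. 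The crucial cocycle step is then to recognise, via the defining relation \eqref{eqn:defcocycle} in the form $V_{s+t}=\wh{V}_s\,\sigma_s(V_t)$, that $\wh{V}_{\pi_j}\,\sigma_{\pi_j}(V_t) = V_{\pi_j + t}$. Thus the finite approximant collapses to $\sum_j V_{\pi_j+t}\, S\bigl((\pi_{j-1},\pi_j]\bigr)$, which is precisely the Riemann sum $V_{(S+t),\pi'}$ for the shifted stop time $S+t$ built from the shifted partition whose $j$th node is $\pi_j+t$; here I would invoke the earlier computation of $S+t$ and note $(S+t)\bigl((\pi_{j-1}+t,\pi_j+t]\bigr)=S\bigl((\pi_{j-1},\pi_j]\bigr)$.

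The remaining work is a convergence argument: I would show that refining $\pi$ makes both the left-hand limit $V_{S+t}$ and the right-hand product $\wh{V}_S\,\sigma_S(V_t)$ arise as strong limits of the \emph{same} net of finite sums, so they agree. The strong continuity of $V$ and the uniform bound $\|\wh{V}_{S,\pi}\|\le 1$ (from the isometry property, as in Lemma~\ref{lem:uni} applied to $\wh{V}$) let me pass $\sigma_{S,\pi}(V_t)\to\sigma_S(V_t)$ through the left factor without losing control, while the convergence $V_{(S+t),\pi'}\to V_{S+t}$ is Theorem~\ref{thm:stopcocycle} for the stop time $S+t$. The main obstacle I anticipate is not the algebra but the interchange of limits: I must verify that the joint refinement of $\pi$ simultaneously drives $\sigma_{S,\pi}\to\sigma_S$ strongly \emph{and} converts the resulting sum into the $(S+t)$-Riemann sum, without an independent limit being needed on each factor. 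Because products are only jointly strongly continuous on bounded sets, I would lean on the uniform contraction bounds to justify replacing $\wh{V}_{S,\pi}\,\sigma_{S,\pi}(V_t)$ by $\wh{V}_S\,\sigma_{S,\pi}(V_t)$ first, then by $\wh{V}_S\,\sigma_S(V_t)$, checking each strong-limit step against a fixed vector $u\evec{f}$ with $f$ of compact support and extending by density.
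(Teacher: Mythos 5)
Your proposal is correct and is essentially the paper's own proof run in the opposite direction: the paper starts from the Riemann sum for $V_{S+t}$ over a partition $\pi$ containing $t$ and $T$, uses $(S+t)\bigl((\pi_{j-1},\pi_j]\bigr)=S\bigl((\pi_{j-1}-t,\pi_j-t]\bigr)$ together with the cocycle identity $V_{\pi_j}=\wh{V}_{\pi_j-t}\,\sigma_{\pi_j-t}(V_t)$, and then factors the resulting single sum into the product $\wh{V}_{S,\pi'\cap[0,T-t]}\,\sigma_{S,\pi'}(V_t)$ by exactly the spectral-orthogonality and adaptedness commutations you describe, before refining $\pi$ and letting $T\to\infty$. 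The only detail to tidy in your version is that the $\wh{V}$ Riemann sums must be taken over partitions of a finite interval $[0,T]$, with $T\to\infty$ at the end (as in the paper and in the Corollary defining $V_S$), since $V_\infty$ is undefined and so a sum over a partition of $[0,\infty]$ with node $\infty$ makes no sense for $\wh{V}$.
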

\begin{proof}
Let $t$, $T \in \R_+$ be such that $t < T$, let
$\pi = \{ 0 = \pi_0 < \cdots < \pi_{p + 1} = \infty \}$ be a finite
partition which contains $t = \pi_m$ and $T = \pi_{n + 1 }$, and
let $\pi' := \pi - t$ be the partition of $[ 0, \infty ]$ such
that~$\pi'_j = \pi_{j + m} - t$ for $j = 0$, \ldots, $p + 1 - m$.
Then, in the notation of Lemma~\ref{lem:uni} and
Theorem~\ref{thm:flowstop},
\begin{align*}
V_{S + t, \pi \cap [ 0, T ]} = \sum_{j = 1}^{n + 1} V_{\pi_j} \, %
( S + t )\bigl( ( \pi_{j - 1}, \pi_j ] \bigr) & = %
\sum_{j = m + 1}^{n + 1} \wh{V}_{\pi_j - t} \, %
\sigma_{\pi_j - t}( V_t ) \, %
S\bigl( ( \pi_{j - 1} - t, \pi_j - t ] \bigr) \\[1ex]
 & = \sum_{j = 1}^{n + 1 - m} \wh{V}_{\pi'_j} %
S\bigl( ( \pi'_{j - 1}, \pi'_j ] \bigr) %
\sum_{k = 1}^{p + 1 - m} S\bigl( ( \pi'_{k - 1}, \pi'_k ] \bigr) %
\, \sigma_{\pi'_k}( V_t ) \\[1ex]
 & = \wh{V}_{S, \pi' \cap [ 0, T - t ]} \, \sigma_{S, \pi'}( V_t ).
\end{align*}
Refining $\pi$ and then letting $T \to \infty$ gives the result.
\end{proof}

\begin{remark}
The identity \eqref{eqn:stoppedcocycle} is a non-deterministic version
of \eqref{eqn:defcocycle}, the defining property of a left cocycle.
\end{remark}

\begin{remark}
In \cite{App88}, Applebaum considers stopping a unitary
identity-adapted process $U$ which satisfies the cocycle identity
\begin{equation}\label{eqn:appdet}
U_t = \Gamma_s^* U_s^* U_{s + t} \Gamma_s %
\qquad \text{for all } s, t \in \R_+
\end{equation}
and the localisation property
\begin{equation}\label{eqn:apploc}
U_s^* U_{s + t} \in \bop{\ini}{} \uwkten I_{s)} \uwkten %
\bop{\fock_{[ s, s + t )}}{} \uwkten I_{[s + t} %
\qquad \text{for all } s, t \in ( 0, \infty );
\end{equation}
for such processes, these conditions are equivalent to being a left
operator Markovian cocycle. The identity
\begin{equation}\label{eqn:appstop}
U_{S + t} = U_S \Gamma_S U_t \Gamma_S^* %
\qquad \text{for all } t \in \R_+
\end{equation}
is obtained \cite[Theorem~4.3]{App88}, where $S$ is any finite quantum
stop time. However, $\Gamma_S U_t \Gamma_S^*$ is taken to act on the
range of the isometry $\Gamma_S$ \cite[(4.1)]{App88}, so it is clearer
to write \eqref{eqn:appstop} in the following manner:
\[
U_{S + t} \Gamma_S = U_S \Gamma_S U_t \Gamma_S^* \Gamma_S = %
U_S \Gamma_S U_t.
\]
From Theorem~\ref{thm:cocyclerel}, if $S$ is any finite quantum stop
time and $V$ is any isometric $p$-adapted cocycle then, by
\eqref{eqn:flowgamma},
\[
V_{S + t} = \wh{V}_S \, \sigma_S( V_t ) \ \Longrightarrow \ %
V_{S + t} \Gamma_S = \wh{V}_S \, \sigma_S( V_t ) \, \Gamma_S = %
\wh{V}_S \Gamma_S V_t,
\]
which is the identity obtained by Applebaum.
Furthermore, Proposition~\ref{prp:inorm} gives that
\[
\wh{V}_S^* V_{S + t} = \wh{V}_S^* \wh{V}_S \, \sigma_S( V_t ) = %
\sigma_S( V_t ) \in \im \sigma_S,
\]
which generalises the localisation condition
\[
U_s^* U_{s + t} \in %
\bop{\ini}{} \uwkten I_{s)} \uwkten \bop{\fock_{[s}}{} = %
\im \sigma_s,
\]
and
\[
\Gamma_S^* \wh{V}_S^* V_{S + t} \Gamma_S = %
\Gamma_S^* \sigma_S( V_t ) \Gamma_S = %
\Gamma_S^* \Gamma_S V_t = V_t,
\]
which is the stopped version of \eqref{eqn:appdet}.

If $V$ is vacuum adapted and $t \in \R_+$ then, by
Theorem~\ref{thm:addt},
\[
\wh{V}_S^* V_{S + t} = \sigma_S( V_t ) = \sigma_S( E_t V_t E_t ) = %
\sigma_S( E_t ) \, \sigma_S( V_t ) \, \sigma_S( E_t ) = %
E_{S + t} \wh{V}_S^* V_{S + t} E_{S + t},
\]
so $\wh{V}_S^* V_{S + t}$ is vacuum adapted at $S + t$. Similarly, if
$V$ is identity adapted and $t \in \R_+$ then Theorem~\ref{thm:addt}
gives that
\[
\sigma_S( V_t ) \, \sigma_{S + t}( X ) = %
\sigma_S\bigl( V_t \, \sigma_t( X ) \bigr) = %
\sigma_S\bigl( \sigma_t( X ) \, V_t \bigr) = %
\sigma_{S + t}( X ) \, \sigma_S( V_t )
\]
for all $X \in \vna' \uwkten \bop{\fock}{}$, so
\[
\wh{V}_S^* V_{S + t} = \sigma_S( V_t ) \in %
\sigma_{S + t}\bigl( \vna' \uwkten \bop{\fock}{} \bigr)' = %
\bigl( \vna' \uwkten I_{S + t)} \uwkten %
\bop{\fock_{[S + t}}{} \bigr)' = %
\vna \uwkten \bop{\fock_{S + t)}}{} \uwkten I_{[S + t}
\]
and $\wh{V}_S^* V_{S + t}$ is identity adapted at $S + t$.
\end{remark}

\subsection*{Acknowledgements}
The first author is grateful to the Jawaharlal Nehru Centre for
Advanced Scientific Research, Bangalore, for its hospitality, and
to the laboratoire de math\'{e}matiques de Besan\c{c}on,
Universit\'{e} de Franche-Comt\'{e}, where helpful conversations with
Professor Uwe Franz took place. Both authors acknowledge support from
the UKIERI research network \emph{Quantum Probability, Noncommutative
Geometry and Quantum Information}.
\section*{References}

\end{document}